\newtheorem{theorem}{Theorem}[section]
\newtheorem{lemma}[theorem]{Lemma}
\newtheorem{proposition}[theorem]{Proposition}
\theoremstyle{definition}
\theoremstyle{remark}
\numberwithin{equation}{section}
\newcommand{\C}{\mathbb{C}}
\newcommand{\R}{\mathbb{R}}
\newcommand{\dint}{\displaystyle\int}
\begin{document}
\title{On the stability  of the  Bresse system with frictional damping
}

\author{Tej-Eddine Ghoul}
\address{New-York University Abu Dhabi
Department of Mathematics 
NYUAD, Saadiyat Island
PO Box 129188, Abu Dhabi, United Arab Emirates, Email: teg6@nyu.edu}
\author{Moez Khenissi }
\address{Ecole Sup\'erieure des Sciences
et de Technologie de Hammam Sousse
Rue Lamine El Abbessi
4011 Hammam Sousse, Tunisia}
\author{Belkacem Said-Houari }
\address{ALHOSN University, Mathematics and Natural Sciences Department, PO Box 38772, Abu Dhabi, United Arab Emirates,Email: bsaidhouari@gmail.com}
\subjclass[2000]{35B37, 35L55, 74D05, 93D15, 93D20. }
\keywords{Decay rate, Bresse system, regularity loos, Timoshenko system, wave speeds.}
\maketitle

\begin{abstract}
In this paper, we consider the Bresse system with frictional damping terms and prove some optimal decay results for the $L^2$-norm of the solution and its higher order derivatives. In fact, if  we consider just one damping term acting on the second equation of the solution, we show that the solution does not decay at all. On the other hand, by considering one damping term alone acting on the third equation, we show that this damping term is strong enough to stabilize the whole system. In this case, we found a completely new stability number that depends on the parameters in the system. 
 In addition, we prove the optimality of the results by using  eigenvalues expansions. We have also improved the result obtained recently in \cite{Said_Soufyane_2014_1} for the two  damping terms case and get better decay estimates. Our obtained results have been proved under some assumptions on the wave speeds of the three equations in the Bresse system.  
\end{abstract}

\section{Introduction}
In this paper, we consider the Cauchy problem of the Bresse system with frictional damping 
\begin{equation}
\left\{
\begin{array}{l}
\varphi _{tt}-\left( \varphi _{x}-\psi -lw\right) _{x}-k^{2}l\left(
w_{x}-l\varphi \right) =0,\vspace{0.2cm} \\
\psi _{tt}-a^{2}\psi _{xx}-\left( \varphi _{x}-\psi -lw\right) +\gamma
_{1}\psi _{t}=0,\vspace{0.2cm} \\
w_{tt}-k^{2}\left( w_{x}-l\varphi \right) _{x}-l\left( \varphi _{x}-\psi
-lw\right)+\gamma_2 w_t =0,%
\end{array}%
\right.  \label{Bresse_main_system}
\end{equation}%
with the initial data
\begin{equation}
\left( \varphi ,\varphi _{t},\psi ,\psi _{t},w,w_{t}\right) \left(
x,0\right) =\left( \varphi _{0},\varphi _{1},\psi _{0},\psi
_{1},w_{0},w_{1}\right),  \label{Initial_data}
\end{equation}%
where $(x,t)\in \mathbb{R}\times \mathbb{R}^{+}$ and $a,\,l,\gamma
_{1},\,\gamma_2$ and $ k$ are positive constants.
The functions $w(x,t),\ \varphi(x,t)$ and $\psi(x,t)$  are, respectively, the longitudinal displacements, the vertical displacement of the beam and  the rotation angle of the linear filaments material. 

The decay rate of the solution of the  problem \eqref{Bresse_main_system}-\eqref{Initial_data} has been first studied by Soufyane and Said-Houari in \cite{Said_Soufyane_2014_1} and investigated the relationship between the frictional damping terms, the wave speeds of propagation and their influence on the decay rate of the solution. In addition, they showed that the $L^2$-norm of the solution decays with the following rate:
\begin{itemize}
\item For $a=1$, we have 
\begin{equation}
\left\Vert \partial _{x}^{j}U\left( t\right) \right\Vert _{L^{2}}\leq
C\left( 1+t\right) ^{-1/4-j/2 }\left\Vert
U_{0}\right\Vert _{L^{1}}+C\left( 1+t\right) ^{-\ell/2}\left\Vert \partial
_{x}^{j+\ell}U_{0}\right\Vert _{L^{2}},\label{Decay_U_estimate_L_2_1_gamma_0_1_gamma_2_0}
\end{equation}
\item For $a\neq 1$, we have 
\begin{equation}
\left\Vert \partial _{x}^{j}U\left( t\right) \right\Vert _{L^{2}}\leq
C\left( 1+t\right) ^{-1/4-j/2 }\left\Vert
U_{0}\right\Vert _{L^{1}}+C\left( 1+t\right) ^{-\ell/4}\left\Vert \partial
_{x}^{j+\ell}U_{0}\right\Vert _{L^{2}}, 
\label{Decay_U_estimate_L_2_1_0}
\end{equation}
\end{itemize}
where $C$ is a positive constant,  $U(x,t)=(\varphi_{x}-\psi-lw,\varphi_{t},a\psi_{x},\psi_{t},k(w_{x}-l\varphi),w_{t})^{T}(x,t)$ and $j$ and $\ell$ are positive integers. 
As we have seen  both  estimates contain some regularity losses. These regularity losses  will make the nonlinear problem  difficult to handle. See for instance \cite{IK08} and \cite{Racke_Said_2012_1} where  similar difficulties hold for the Timoshenko system.  

The main  open questions stated in \cite{Said_Soufyane_2014_1} were: 
\begin{itemize}
\item Is it possible to remove the regularity loss in the above estimates, especially in \eqref{Decay_U_estimate_L_2_1_gamma_0_1_gamma_2_0}?
\item Can we prove some decay estimates by considering just one damping term in the system. That is  for $\gamma_1=0$ or $\gamma_2=0$? 
\end{itemize}
The main goal of this paper is to give answers to the above questions.
 Indeed, we can summarize   our results as follows:

First, 
 for $\gamma_1>0$ and $\gamma_2>0$, we refined the decay estimates in \eqref{Decay_U_estimate_L_2_1_gamma_0_1_gamma_2_0} and \eqref{Decay_U_estimate_L_2_1_0} and instead, we obtained the following: 
\begin{itemize}
\item For $a=1$, we have 
\begin{equation}
\left\Vert \partial _{x}^{j}U\left( t\right) \right\Vert _{L^{2}}\leq
C\left( 1+t\right) ^{-1/4-j/2 }\left\Vert
U_{0}\right\Vert _{L^{1}}+Ce^{-ct}\left\Vert \partial
_{x}^{j}U_{0}\right\Vert _{L^{2}},\label{Decay_U_estimate_L_2_1_gamma_0_1_gamma_2_2}
\end{equation}
\item For $a\neq 1$, we have 
\begin{equation}
\left\Vert \partial _{x}^{j}U\left( t\right) \right\Vert _{L^{2}}\leq
C\left( 1+t\right) ^{-1/4-j/2 }\left\Vert
U_{0}\right\Vert _{L^{1}}+C\left( 1+t\right) ^{-\ell/2}\left\Vert \partial
_{x}^{j+\ell}U_{0}\right\Vert _{L^{2}}.\label{Decay_U_estimate_L_2_1_section_6_gamma_2_2}
\end{equation}
\end{itemize}
In addition, we showed  the optimality of  the above  estimates \eqref{Decay_U_estimate_L_2_1_gamma_0_1_gamma_2_2} and \eqref{Decay_U_estimate_L_2_1_section_6_gamma_2_2} by exploiting  the eigenvalues expansion. Consequently, those estimates, under the same assumptions on  the initial data are optimal and cannot be improved. The  proof is  essentially based on  refinements  of the Lyapunov functionals used in \cite{Said_Soufyane_2014_1}, which  give the following decays for the Fourier image of the solution $\hat{U}(\xi,t)$: 
\begin{equation}
\vert \hat{U}\left( \xi ,t\right) \vert ^{2}\leq \left\{
\begin{array}{ll}
Ce^{-c\rho _{1}\left( \xi \right) t}\vert \hat{U}\left( \xi ,0\right)
\vert ^{2}, & \text{ if}\quad a=1,\quad \rho _{1}\left( \xi \right) =\dfrac{\xi ^{2}}{1+\xi ^{2} },\vspace{0.1cm} \\
Ce^{-c\rho _{2}\left( \xi \right) t}\vert \hat{U}\left( \xi ,0\right)
\vert ^{2}, & \text{ if}\quad a\neq 1,\quad \rho _{2}\left( \xi \right) =\dfrac{\xi ^{2}}{ 1+\xi ^{2}+\xi^4 }. 
\end{array}%
\right.  \label{estimate_fourier_semigroup1_gamma_0_gamma_0_0}
\end{equation}
Since $\rho_1(\xi)$ is behaving like $|\xi|^2$ near  zero, then the decay rate of the solution is the same as the one of the heat kernel. 
 On the other hand,  as $|\xi|$ goes to infinity,  $\rho_1(\xi)$ does not go  to zero, which keeps  the dissipation  still effective at infinity and  prevents the regularity loss in \eqref{Decay_U_estimate_L_2_1_gamma_0_1_gamma_2_2}.
 When $a\neq 1$,  $\rho_2(\xi)$ has the same behavior as $\rho_1(\xi)$ near zero, but as $|\xi|$ tends to infinity,   $\rho_2(\xi)$ goes to zero, which induces  the regularity loss at infinity. 

Second,  for $\gamma_2=0$, we showed, despite  the presence  of the dissipation term $\gamma_1\psi_t (x, t)$ in the second equation, that the solution  $U(x,t)$ of the system does not decay at all.  This is due to the weakness of the coupling term $\left( \varphi _{x}-\psi
-lw\right)$  which is of order zero in the second equation. For instance, this can be viewed when $l=0$, where the first two equations in the system \eqref{Bresse_main_system} reduces to the Timoshenko system  and the third one becomes a conservative decoupled wave equation
\begin{equation}
\left\{
\begin{array}{l}
\varphi _{tt}-\left( \varphi _{x}-\psi \right) _{x}=0,\vspace{0.2cm} \\
\psi _{tt}-a^{2}\psi _{xx}-\left( \varphi _{x}-\psi \right) +\gamma
_{1}\psi _{t}=0,\vspace{0.2cm}\\
w_{tt}-k^2w_{xx}=0. 
\end{array}%
\right.  \label{Timosh_1}
\end{equation}
If we pick  initial  data $w_0$ and $w_1$ such that the corresponding solution  $w(x,t)=0$, for all $t>0$, then the  system \eqref{Timosh_1} will decay like the Timoshenko one. 
 In fact, we proved  (see Theorem \ref{Main_Theorem_1}) the following estimate 
\begin{equation}\label{No_decay_Estimate}
\left\Vert \partial _{x}^{j}U\left( t\right) \right\Vert _{L^{2}}\leq
C\left\Vert \partial
_{x}^{j}U_{0}\right\Vert _{L^{2}}. 
\end{equation}

Third, for $\gamma_1=0$, thanks to the strong  coupling term $k^{2}\left( w_{x}-l\varphi \right) _{x}$,  which is of order one, in the third equation,  the effect of the dissipation term  
$\gamma_2w_t(x,t)$ will be propagated to the other equations of the system. More precisely, 
 we proved that the solution decays with the following rate:
\begin{itemize}
\item For $a=k=1$, we have 
\begin{equation}
\left\Vert \partial _{x}^{j}U\left( t\right) \right\Vert _{L^{2}}\leq
C\left( 1+t\right) ^{-1/4-j/2 }\left\Vert
U_{0}\right\Vert _{L^{1}}+C(1+t)^{-\ell/2}\left\Vert \partial
_{x}^{j+\ell+1}U_{0}\right\Vert _{L^{2}}, 
 \label{Decay_U_estimate_L_2_1_gamma_0_1_gamma_2_k_0}
\end{equation}
\item For $a\neq 1$, we have 
\begin{equation}
\left\Vert \partial _{x}^{j}U\left( t\right) \right\Vert _{L^{2}}\leq
C\left( 1+t\right) ^{-1/4-j/2 }\left\Vert
U_{0}\right\Vert _{L^{1}}+C\left( 1+t\right) ^{-\ell/2}\left\Vert \partial
_{x}^{j+\ell+3}U_{0}\right\Vert _{L^{2}},
\label{Decay_U_estimate_L_2_1_section_6_gamma_2_k_0}
\end{equation}
\end{itemize} 
The above results were proved under the following additional assumptions on the coefficients of the system 
\begin{equation}\label{New_Condition}
(k^2-1)l^2\neq 1. 
\end{equation}
The above stability assumption is  completely new in this framework and it is satisfied for instance if $k=1$ or $l=0$.  

To prove the above estimates, we present a new method that is based solely on the eigenvalues expansion and it does not require the knowledge of eigenvectors and eigenspaces at all. (See Lemma \ref{Theorem_Exponential_avoid_Jordan}).  Indeed, we split the frequency space into three regions, low frequencies, middle frequencies and high frequencies, where in each region we derive  the eigenvalues expansion and estimate the Fourier image of the solution in each region. In particular, this method avoids the use of the Jordan canonical form in computing the exponential of the matrix of the solution which is a heavy  task to accomplish, especially, for large systems. 
 The  method that we introduce   here seems new and can be easily extended to other problems. 
 
 Now, before closing this section, let us recall some related results.  The initial boundary value problem associated to \eqref{Bresse_main_system} has been considered by many peoples recently.   Liu and Rao \cite{LRa09} investigated  the Bresse system with two
different dissipative mechanism, given by two temperatures coupled to the
system. The authors proved  that the exponential decay exists only when the
velocities of the wave propagations are the same. If the wave speeds are
different they showed that the energy of the system decays polynomially to
zero with the rate $t^{-1/2}$ or $t^{-1/4}$, provided that the
boundary conditions is of Dirichlet--Neumann--Neumann or
Dirichlet--Dirichlet--Dirichlet type, respectively.  This result was improved
by Fatori and Mu\~ noz Rivera \cite{FaR10}, where they showed that, in
general, the Bresse system is not exponentially stable but there exists
polynomial stability with rates that depend on the wave speed propagations and the
regularity of the initial data.

For the Cauchy problem, there are only few results. The first paper that dealt with the Cauchy problem for  the Bresse system is \cite{Said_Soufyane_2014_1},  where the authors investigated the relationship between the frictional damping terms, the wave speeds of propagation and their influences on the decay rate of the solution. In addition, they showed (among other results) the estimates \eqref{Decay_U_estimate_L_2_1_gamma_0_1_gamma_2_0} and \eqref{Decay_U_estimate_L_2_1_0}. The Bresse--Fourier system (Bresse system coupled with the Fourier law of heat conduction)  has been investigated by Said-Houari and Soufyane in  \cite{Said_Soufyan_2014_MMAS}, the Bresse--Cattaneo by Said-Houari and Hamadouche \cite{Said_Katia_2015} and the Bresse system in thermoelasticity of type III by   Said-Houari and Hamadouche \cite{Said_Katia_2015_2} where in these three systems, some decay estimates have been proved under some appropriate assumptions on the coefficients of the systems. 
 
 This paper is organized as follows: in Section \ref{Section_2}, we state the problem. Section \ref{Section_Non_decaying} is devoted to the case $\gamma_1>0$ and $\gamma_2=0$, where we prove that the solution does not decay at all.  In Section \ref{Asym_Expan_Section}, we study the case $\gamma_1>0$ and $\gamma_2>0$, where we  prove an optimal decay rate using the Lyapunov functional method. Finally in  Section \ref{Section_5} we  investigate the case where $\gamma_1=0$ and $\gamma_2>0$ and show, through a new method based on the use of the eigenvalues expansion combined with Fourier splitting method, the optimal decay rate of the solution. 
 
\section{Statement of the problem}\label{Section_2}

In this section, we state the  problem and introduce  some materials  that will be needed later. Let us first rewrite system (%
\ref{Bresse_main_system})-(\ref{Initial_data}) as a first order system of
the form
\begin{equation}
\left\{
\begin{array}{l}
U_{t}+AU_{x}+LU=0,\vspace{0.2cm} \\
U\left( x,0\right) =U_{0},%
\end{array}%
\right.  \label{First_order_system}
\end{equation}%
where $A$ is a real symmetric matrix and $L$ is a non-negative (non-symmetric) definite  
matrix. To this end, we introduce the following variables:
\begin{equation}\label{Change_Variables}
v=\varphi _{x}-\psi -lw,\quad u=\varphi _{t},\quad z=a\psi _{x},\quad y=\psi
_{t},\quad \phi =k\left( w_{x}-l\varphi \right) ,\quad \eta =w_{t}.
\end{equation}%
Consequently, system (\ref{Bresse_main_system}) can be rewritten into the
following first order system
\begin{equation}
\left\{
\begin{array}{ll}
v_{t}-u_{x}+y+l\eta =0,\vspace{0.2cm} &  \\
u_{t}-v_{x}-lk\phi =0,\vspace{0.2cm} &  \\
z_{t}-ay_{x}=0,\vspace{0.2cm} &  \\
y_{t}-az_{x}-v+\gamma _{1}y=0,\vspace{0.2cm} &  \\
\phi _{t}-k\eta _{x}+lku=0,\vspace{0.2cm} &  \\
\eta _{t}-k\phi _{x}-lv+\gamma_2\eta  =0 &
\end{array}%
\right.  \label{Transformed_system}
\end{equation}%
and the initial conditions (\ref{Initial_data}) takes the form
\begin{equation}
\left( v,u,z,y,\phi ,\eta \right) \left( x,0\right) =\left(
v_{0},u_{0},z_{0},y_{0},\phi _{0},\eta _{0}\right),
\label{initial_conditions_2}
\end{equation}%
where%
\begin{equation*}
v_{0}=\varphi _{0x}-\psi _{0}-lw_{0},\quad u_{0}=\varphi _{1},\quad
z_{0}=a\psi _{0x},\quad y_{0}=\psi _{1},\quad \phi
_{0}=kw_{0x}-lk\varphi _{0},\quad \eta =w_{1}.
\end{equation*}%
System (\ref{Transformed_system})-(\ref{initial_conditions_2}) is equivalent
to system (\ref{First_order_system}) with%
\begin{equation*}
U=\left(
\begin{array}{c}
v \\
u \\
z \\
y \\
\phi \\
\eta%
\end{array}%
\right) ,\,\ A=\left(
\begin{array}{cccccc}
0 & -1 & 0 & 0 & 0 & 0 \\
-1 & 0 & 0 & 0 & 0 & 0 \\
0 & 0 & 0 & -a & 0 & 0 \\
0 & 0 & -a & 0 & 0 & 0 \\
0 & 0 & 0 & 0 & 0 & -k \\
0 & 0 & 0 & 0 & -k & 0%
\end{array}%
\right) ,\,\ L=\left(
\begin{array}{cccccc}
0 & 0 & 0 & 1 & 0 & l \\
0 & 0 & 0 & 0 & -lk & 0 \\
0 & 0 & 0 & 0 & 0 & 0 \\
-1 & 0 & 0 & \gamma_1  & 0 & 0 \\
0 & lk & 0 & 0 & 0 & 0 \\
-l & 0 & 0 & 0 & 0 & \gamma_2%
\end{array}%
\right)
\end{equation*}%
and $U_{0}=\left( v_{0},u_{0},z_{0},y_{0},\phi _{0},\eta _{0}\right) ^{T}$.

By taking the Fourier transform of (\ref{First_order_system}) we obtain the
following Cauchy problem of a first order system%
\begin{equation}
\left\{
\begin{array}{l}
\hat{U}_{t}+i\xi A\hat{U}+L\hat{U}=0,\vspace{0.2cm} \\
\hat{U}\left( \xi ,0\right) =\hat{U}_{0}.%
\end{array}%
\right.  \label{Fourier_system}
\end{equation}
The solution of \eqref{Fourier_system} is given by 
\begin{equation*}
\hat{U}(\xi,t)=e^{\Phi(i\xi)t}\hat{U}_0(\xi),
\end{equation*}
where 
\begin{equation}\label{Phi_Matrix}
\Phi(\zeta)=-(L+\zeta A),\qquad \zeta=i\xi\in \C. 
\end{equation}

\section{Non decaying solutions for  $\gamma_2=0$ }\label{Section_Non_decaying}
In this section, we  assume that $\gamma_2=0$ and show that the damping term $\gamma_1\psi _{t}$ is not strong enough to obtain a decay rate of the solution. Our main result in this section reads as follows:
 
 \begin{theorem}[No decay rates]\label{Main_Theorem_1}
Let $s$ be a nonnegative integer. Let $U(x,t)$ be the solution of \eqref{First_order_system}. Assume that $U_0\in H^s(\R)$,  then the  following estimate holds: 
\begin{equation}\label{Estimate_1_0}
\left\Vert \partial _{x}^{j}U\left( t\right) \right\Vert _{L^{2}}\leq
C\left\Vert \partial
_{x}^{j}U_{0}\right\Vert _{L^{2}}, \quad j=0,1,...,s. 
\end{equation}
where $C$ is a  positive constant.  
\end{theorem}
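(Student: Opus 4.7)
The plan is to exhibit a simple energy identity in Fourier space showing that $|\hat U(\xi,t)|^{2}$ is monotone non-increasing in $t$ for every fixed $\xi \in \R$; combined with Plancherel's identity, this will immediately give \eqref{Estimate_1_0} with $C = 1$.

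First I would take the $\C^{6}$-inner product of the Fourier system \eqref{Fourier_system} with $\hat U$ and extract real parts. The contribution of $i\xi A$ vanishes because $A$ is real symmetric (so $i\xi A$ is skew-Hermitian), and only the symmetric part $L_{s} = (L + L^{T})/2$ of $L$ contributes to the term coming from $L$. Reading off the matrix $L$ displayed in Section \ref{Section_2} with $\gamma_{2} = 0$, I expect $L_{s}$ to have a single nonzero entry, namely $\gamma_{1}$ at position $(4,4)$, so that the energy identity reduces to
$$
\frac{d}{dt}\bigl|\hat U(\xi,t)\bigr|^{2} \;=\; -2\gamma_{1}\,\bigl|\hat y(\xi,t)\bigr|^{2} \;\leq\; 0.
$$
This yields the pointwise bound $|\hat U(\xi,t)| \leq |\hat U_{0}(\xi)|$ uniformly in $\xi$ and $t$; multiplying by $|\xi|^{2j}$, integrating in $\xi$, and invoking Plancherel's identity then produces the claimed estimate.

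There is no genuine obstacle in the proof itself; the theorem is structural rather than technical. The content of the computation is to make it visible that, when $\gamma_{2} = 0$, the only component directly dissipated by $L$ is $\hat y$, and that because $A$ and $L$ are independent of $\xi$ this dissipation cannot be redistributed to the other components at any frequency. This is precisely why no frequency-weighted decay factor can be placed on the right-hand side of \eqref{Estimate_1_0}, and it is what opens the door to the matching non-decay assertion, which would be established separately through the eigenvalue expansion of the symbol $\Phi(i\xi)$ defined in \eqref{Phi_Matrix}.
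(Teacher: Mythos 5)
Your argument is correct and is essentially the paper's own proof: the authors derive exactly the identity $\frac{d}{dt}\hat{E}(\xi,t)=-\gamma_{1}|\hat{y}|^{2}$ (Lemma \ref{diss_Lemma} with $\gamma_{2}=0$), conclude $|\hat U(\xi,t)|\le|\hat U(\xi,0)|$, and apply Plancherel, which is precisely your computation phrased componentwise rather than via the skew-Hermitian/symmetric splitting. The only additional content in the paper's proof is the justification of the ``no decay'' claim in the theorem's title, done by factoring the characteristic polynomial \eqref{Polynomial_Factor} to exhibit the two pure imaginary eigenvalues $\lambda=\pm ik\sqrt{l^{2}+\xi^{2}}$ of $\Phi(i\xi)$; you correctly flag this as a separate eigenvalue argument, and it is not needed for the estimate \eqref{Estimate_1_0} itself.
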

\begin{proof}
To prove  \eqref{Estimate_1_0}, we use the dissipation of the energy 
\begin{equation*}
\hat{E}(\xi,t)=\frac{1}{2}|\hat{U}(\xi,t)|^2
\end{equation*}
of \eqref{Fourier_system} which satisfies    (see Lemma \ref{diss_Lemma}) 
\begin{equation}
\dfrac{d\hat{E}(\xi,t)}{dt}=-\gamma _{1}\left\vert \hat{y}\right\vert
^{2},\qquad \forall t\geq 0. 
\label{estimate derive}
\end{equation}
Hence, the solution of \eqref{Fourier_system} satisfies 
\begin{equation}\label{Pointwise_estimate_1}
|\hat{U}(\xi,t)|^2 \leq |\hat{U}(\xi,0)|^2,\qquad \forall t\geq 0. 
\end{equation}
Therfore, \eqref{Estimate_1_0} follows from the Plancherel theorem.

To justify the nondecay rate in \eqref{Estimate_1_0},  
 we  show that there exists at least one eigenvalue  $\lambda(\zeta)$ of $\Phi$ such that $Re(\lambda(\zeta))=0$. We compute the characteristic polynomial of $\Phi(\zeta)$ to get 
\begin{eqnarray}\label{Characteristic}
f(\lambda,\zeta)&=&\det(\lambda I-\Phi(\zeta))\notag\\
&=&\lambda^6+\gamma_1\lambda^5
+\Big\{(k^2+1)(l^2-\zeta ^2) +1-a^2\zeta^2\Big\}\lambda^4\notag\\
&&+\gamma_1(k^2+1)(  l^2-\zeta ^2 )\lambda^3\notag\\
&&+(l^2-\zeta^2) \Big\{k^2(l^2-\zeta^2)+\Big(k^2-a^2(k^2+1)\zeta^2\Big)\Big\}\lambda^2\notag\\
&&+ \gamma_1 k^2( l^2-\zeta ^2)^2\lambda
-a^2k^2\zeta^2(l^2-\zeta^2)^2.
\end{eqnarray}
 The above polynomial can be rewritten as 
\begin{equation}\label{Polynomial_Factor}
\left(\lambda^2+k^2 (l^2- \zeta^2)\right)
 \Big\{\lambda ^4+\gamma_1 \lambda ^3 +\left(l^2+1-\zeta^2(a^2+1)\right)\lambda ^2 +  \gamma_1\left( l^2- \zeta^2\right)\lambda
 -a^2  \zeta^2(l^2-\zeta^2)\Big\}. 
\end{equation}
Since $\zeta=i\xi$, it is clear  from \eqref{Polynomial_Factor} that for all $\xi\in\R$,  the matrix $\Phi(\zeta)$ has two pure imaginary roots. 
Consequently, according to the stability theory of linear ODE systems (see \cite[p.71]{GT})  the solution of \eqref{Fourier_system} doesn't go to zero. 
\end{proof}

\section{Optimal decay rates for $\gamma_i> 0,\, i=1,2$}\label{Asym_Expan_Section}
In this section, we consider   the system 
\begin{equation}
\left\{
\begin{array}{l}
\varphi _{tt}-\left( \varphi _{x}-\psi -lw\right) _{x}-k^{2}l\left(
w_{x}-l\varphi \right) =0,\vspace{0.2cm} \\
\psi _{tt}-a^{2}\psi _{xx}-\left( \varphi _{x}-\psi -lw\right) +\gamma
_{1}\psi _{t}=0,\vspace{0.2cm} \\
w_{tt}-k^{2}\left( w_{x}-l\varphi \right) _{x}-l\left( \varphi _{x}-\psi
-lw\right)+\gamma_2w_t =0,%
\end{array}%
\right.  \label{Bresse_main_system_2}
\end{equation}%
with the initial data
\begin{equation}
\left( \varphi ,\varphi _{t},\psi ,\psi _{t},w,w_{t}\right) \left(
x,0\right) =\left( \varphi _{0},\varphi _{1},\psi _{0},\psi
_{1},w_{0},w_{1}\right) ,  \label{Initial_data_2}
\end{equation}%
where $(x,t)\in \mathbb{R}\times \mathbb{R}^{+}$ and $a,\,l,\gamma
_{1}, \gamma_2$ and $ k $ are positive constants. The main result in this section reads as:
\begin{theorem}[Optimal decay rates]\label{Main_Theorem_2}
Let $s$ be a nonnegative integer. Let $U(x,t)$ be the solution of \eqref{First_order_system}. Assume that $U_0\in H^s(\R)\cap L^1(\R)$,  then the following decay estimates hold:
\begin{itemize}
\item For $a=1$, we have 
\begin{equation}
\left\Vert \partial _{x}^{j}U\left( t\right) \right\Vert _{L^{2}}\leq
C\left( 1+t\right) ^{-1/4-j/2 }\left\Vert
U_{0}\right\Vert _{L^{1}}+Ce^{-ct}\left\Vert \partial
_{x}^{j}U_{0}\right\Vert _{L^{2}}, \quad j=0,1,...,s. \label{Decay_U_estimate_L_2_1_gamma_0_1_gamma_2}
\end{equation}
\item For $a\neq 1$, we have 
\begin{equation}
\left\Vert \partial _{x}^{j}U\left( t\right) \right\Vert _{L^{2}}\leq
C\left( 1+t\right) ^{-1/4-j/2 }\left\Vert
U_{0}\right\Vert _{L^{1}}+C\left( 1+t\right) ^{-\ell/2}\left\Vert \partial
_{x}^{j+\ell}U_{0}\right\Vert _{L^{2}}, \quad j=0,1,...,s-\ell .\label{Decay_U_estimate_L_2_1_section_6_gamma_2}
\end{equation}
\end{itemize}

where $C$ and $c$ are two positive constants.  
\end{theorem}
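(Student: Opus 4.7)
The plan is to work entirely in Fourier space and derive the pointwise estimate \eqref{estimate_fourier_semigroup1_gamma_0_gamma_0_0}, from which \eqref{Decay_U_estimate_L_2_1_gamma_0_1_gamma_2} and \eqref{Decay_U_estimate_L_2_1_section_6_gamma_2} follow by a standard low/high frequency splitting in Plancherel combined with the Hausdorff--Young inequality.

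The heart of the argument is to build a Lyapunov functional $F(\xi,t)$, pointwise in $\xi$, equivalent to $|\hat U(\xi,t)|^{2}$ and satisfying
\[
\frac{d}{dt} F(\xi,t) \leq -c\,\rho(\xi)\, F(\xi,t),
\]
with $\rho = \rho_{1}$ when $a=1$ and $\rho = \rho_{2}$ when $a\neq 1$. The basic energy identity for \eqref{Fourier_system} yields only $\frac{d}{dt}\hat E(\xi,t) = -\gamma_{1} |\hat y|^{2} - \gamma_{2} |\hat \eta|^{2}$, so the task is to recover dissipation for the remaining components $\hat v,\hat u,\hat z,\hat\phi$. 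I would multiply each equation of the Fourier image of \eqref{Transformed_system} by suitable auxiliary multipliers (of the form $i\xi\,\overline{\hat{u}}\hat{v}$, $i\xi\,\overline{\hat{z}}\hat{y}$, $\overline{\hat{v}}\hat{y}$, etc., as in Timoshenko-type arguments), take real parts, and extract a $|\cdot|^{2}$ term for the missing component at the cost of quantities already absorbed by $|\hat y|^{2}+|\hat\eta|^{2}$. Adding these perturbations to $\hat E$ with small $\xi$-dependent weights gives $F$.

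I expect the construction and tuning of $F$ to be the main obstacle, as it is a refinement of the functionals of \cite{Said_Soufyane_2014_1}. The critical point is to choose the weights so that the $\xi$-factors appearing in the cross-terms combine into exactly the denominators $1+\xi^{2}$ (for $a=1$) or $1+\xi^{2}+\xi^{4}$ (for $a\neq 1$): the extra $\xi^{4}$ in the second case is forced by the speed mismatch in the second equation, which requires an additional multiplier carrying an extra factor of $\xi^{2}$. Once the inequality above is established, integrating it gives the pointwise bound $|\hat U(\xi,t)|^{2}\leq C e^{-c\rho(\xi)t}|\hat U(\xi,0)|^{2}$.

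To conclude, I split $\mathbb{R}=\{|\xi|\leq 1\}\cup\{|\xi|>1\}$ in Plancherel applied to $\|\partial_{x}^{j}U(t)\|_{L^{2}}^{2}=\int_{\mathbb{R}}|\xi|^{2j}|\hat U(\xi,t)|^{2}d\xi$. On the low-frequency region both $\rho_{1},\rho_{2}$ behave like $\xi^{2}$, so $e^{-c\rho(\xi) t}\lesssim e^{-c\xi^{2}t}$; using $|\hat U(\xi,0)|\leq \|U_{0}\|_{L^{1}}$ and a change of variables produces the heat-type rate $(1+t)^{-1/4-j/2}\|U_{0}\|_{L^{1}}$. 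On the high-frequency region, when $a=1$ we have $\rho_{1}(\xi)\geq 1/2$, so $e^{-c\rho_{1}(\xi) t}\leq e^{-ct/2}$, which yields the exponential term in \eqref{Decay_U_estimate_L_2_1_gamma_0_1_gamma_2} with no loss of regularity. When $a\neq 1$, $\rho_{2}(\xi)\sim \xi^{-2}$ for large $|\xi|$, and the elementary inequality $e^{-ct/\xi^{2}}\leq C_{\ell}(\xi^{2}/t)^{\ell}$ absorbs $\ell$ additional derivatives of $U_{0}$ and gives the $(1+t)^{-\ell/2}$ decay in \eqref{Decay_U_estimate_L_2_1_section_6_gamma_2}.
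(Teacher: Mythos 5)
Your proposal is correct and follows essentially the same route as the paper: a Fourier-space Lyapunov functional built from the basic energy plus cross-term multipliers (the paper's $\mathscr{F},\mathscr{K},\mathscr{P}$ added to $(1+\xi^{2})\hat{E}$ or $(1+\xi^{2}+\xi^{4})\hat{E}$), yielding exactly the pointwise bounds with $\rho_{1},\rho_{2}$, followed by the identical low/high frequency splitting in Plancherel. Your diagnosis of where the extra $\xi^{4}$ comes from (the speed-mismatch term, which in the paper appears as $(a^{2}-1)\mathrm{Re}(i\xi^{3}\hat{u}\bar{\hat{y}})$) is also the correct mechanism, though the actual tuning of the constants in the functional is only sketched rather than carried out.
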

The proof of Theorem \ref{Main_Theorem_2} is based on some pointwise estimates in the Fourier space (Lemma \ref{Fourier_semigroup_estimates_section_4_gamma_0}) and  given in Subsection \ref{Subsection_Proof_1}. The optimality of the decay rates is given in Subsection \ref{Section_Optimality_1}.

\subsection{Pointwise estimates in the Fourier space}\label{Seb_Section_2_1}
In this section, we use the Lyapunov functional method to   show Theorem \ref{Main_Theorem_2}. We will prove later that the Lyapunov functional method agrees  with the   eigenvalues expansion (See Proposition \ref{Propo_1}) which shows the optimality of the estimates given in Theorem \ref{Main_Theorem_2}.

System (\ref{Fourier_system}) can be rewritten into the following form%
\begin{equation}
\left\{
\begin{array}{ll}
\hat{v}_{t}-i\xi \hat{u}+\hat{y}+l\hat{\eta}=0,\vspace{0.2cm} &  \\
\hat{u}_{t}-i\xi \hat{v}-lk\hat{\phi}=0,\vspace{0.2cm} &  \\
\hat{z}_{t}-ai\xi \hat{y}=0,\vspace{0.2cm} &  \\
\hat{y}_{t}-ai\xi \hat{z}-\hat{v}+\gamma _{1}\hat{y}=0,\vspace{0.2cm} &  \\
\hat{\phi}_{t}-i\xi k\hat{\eta}+lk_{0}\hat{u}=0,\vspace{0.2cm} &  \\
\hat{\eta}_{t}-i\xi k\hat{\phi}-l\hat{v}+\gamma _{2}\hat{\eta}=0. &
\end{array}%
\right.  \label{Fourier_system_2}
\end{equation}%
Let us now define the following energy functional%
\begin{equation}
\hat{E}\left(\xi, t\right) =\frac{1}{2}(\left\vert \hat{v}\right\vert
^{2}+\left\vert \hat{u}\right\vert ^{2}+\left\vert \hat{z}\right\vert
^{2}+\left\vert \hat{y}\right\vert ^{2}+|\hat{\phi}|^{2}+\left\vert \hat{\eta%
}\right\vert ^{2})(\xi,t).  \label{First_energy_seond_sound}
\end{equation}%
The next lemma states that the energy $\hat{E}\left( \xi,t\right) $ of
the entire system (\ref{Fourier_system_2}) (or equivalently system (\ref%
{Fourier_system})) is a non-increasing function. More precisely we have the
following result.

\begin{lemma}
\label{diss_Lemma} Let $(\hat{v},\hat{u},\hat{z},\hat{y},\hat{\phi},\hat{\eta%
})$ be the solution of (\ref{Fourier_system}), then the energy $%
\hat{E}\left( \xi,t\right) $ is a non-increasing function and satisfies,
for all $t\geq 0$,
\begin{equation}
\dfrac{d\hat{E}(\xi,t)}{dt}=-\gamma _{1}\left\vert \hat{y}\right\vert
^{2}-\gamma _{2}\left\vert \hat{\eta}\right\vert ^{2}.
\label{estimate derive}
\end{equation}
\end{lemma}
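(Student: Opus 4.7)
The natural approach is a direct energy identity in the Fourier variable: take the Hermitian inner product of the system $\hat{U}_t + i\xi A\hat{U} + L\hat{U} = 0$ with $\hat{U}$ in $\mathbb{C}^6$ and take real parts. This gives
\begin{equation*}
\tfrac{1}{2}\tfrac{d}{dt}|\hat{U}(\xi,t)|^2 + \mathrm{Re}\langle i\xi A\hat{U},\hat{U}\rangle + \mathrm{Re}\langle L\hat{U},\hat{U}\rangle = 0 .
\end{equation*}
So the proof reduces to showing that the first inner product vanishes and the second equals $\gamma_1|\hat{y}|^2+\gamma_2|\hat{\eta}|^2$.

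For the hyperbolic term, the key observation is that $A$ is a \emph{real symmetric} matrix, so $i\xi A$ is skew-Hermitian for every real $\xi$; consequently $\mathrm{Re}\langle i\xi A\hat{U},\hat{U}\rangle = 0$ automatically. For the zero-order term, I would read off the symmetric part $\tfrac{1}{2}(L+L^T)$ of $L$ directly from the matrix given in Section~\ref{Section_2}: the off-diagonal entries come in antisymmetric pairs $(L_{1,4},L_{4,1})=(1,-1)$, $(L_{1,6},L_{6,1})=(l,-l)$ and $(L_{2,5},L_{5,2})=(-lk,lk)$, so the symmetric part is $\mathrm{diag}(0,0,0,\gamma_1,0,\gamma_2)$, yielding $\mathrm{Re}\langle L\hat{U},\hat{U}\rangle = \gamma_1|\hat{y}|^2+\gamma_2|\hat{\eta}|^2$.

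Alternatively, and perhaps in keeping with the component-wise style of the paper, the same identity can be derived by hand from the six scalar equations in \eqref{Fourier_system_2}: multiply equation $j$ by the conjugate of its leading unknown, add up all six, and take real parts. The transport cross terms cancel in pairs because $\mathrm{Re}(i\xi\hat{u}\bar{\hat{v}}) + \mathrm{Re}(i\xi\hat{v}\bar{\hat{u}}) = \mathrm{Re}(i\xi\cdot 2\,\mathrm{Re}(\hat{u}\bar{\hat{v}})) = 0$ (and similarly for the pairs $(\hat{z},\hat{y})$ and $(\hat{\phi},\hat{\eta})$), while the zero-order coupling cross terms cancel because $\mathrm{Re}(\hat{y}\bar{\hat{v}})=\mathrm{Re}(\hat{v}\bar{\hat{y}})$, etc. What remains are exactly the two diagonal dissipative contributions $\gamma_1|\hat{y}|^2$ from the fourth equation and $\gamma_2|\hat{\eta}|^2$ from the sixth.

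There is no real obstacle here: both routes are routine computations whose success is built into the symmetric-hyperbolic structure $(A=A^T)$ together with the fact that the coupling entries of $L$ appear in antisymmetric off-diagonal pairs. The only mild care needed is to verify the sign pattern of $L$ once, so that no stray term survives when summing. Since $\gamma_1,\gamma_2>0$, the right-hand side is non-positive, which gives both the identity and the monotonicity of $\hat{E}(\xi,t)$.
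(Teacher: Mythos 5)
Your proposal is correct and is essentially the paper's own proof: the authors multiply each of the six equations of \eqref{Fourier_system_2} by the conjugate of its leading unknown, sum, and take real parts, which is exactly your second (component-wise) route, and your matrix formulation via the skew-Hermitian part of $i\xi A$ and the symmetric part of $L$ is the same computation in compact form. No gaps.
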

\begin{proof}
Multiplying the first equation in (\ref{Fourier_system_2})
by $\bar{\hat{v}}$, the second equation by $\bar{\hat{u}}$, the third
equation by $\bar{\hat{z}}$, the fourth equation by $\bar{\hat{y}}$, the
fifth equation $\bar{\hat{\phi}}$, the sixth equation by $\bar{%
\hat{\eta}}$, adding these equalities and taking the real part, then (%
\ref{estimate derive}) holds. 
\end{proof}
The following lemma is crucial for  the proof of Theorem \ref{Main_Theorem_2}. With this lemma in hand, we can show the decay estimates of the solution. 
\begin{lemma}
\label{Fourier_semigroup_estimates_section_4_gamma_0} Let $\hat{U}\left( \xi ,t\right) $ be
the solution of (\ref{Fourier_system}). Then for any $t\geq 0$ and $\xi \in
\mathbb{R}$, we have the following pointwise estimates:
\begin{equation}
\vert \hat{U}\left( \xi ,t\right) \vert ^{2}\leq \left\{
\begin{array}{ll}
Ce^{-c\rho _{1}\left( \xi \right) t}\vert \hat{U}\left( \xi ,0\right)
\vert ^{2}, & \text{ if}\quad a=1\vspace{0.3cm} \\
Ce^{-c\rho _{2}\left( \xi \right) t}\vert \hat{U}\left( \xi ,0\right)
\vert ^{2}, & \text{ if}\quad a\neq 1,
\end{array}%
\right.  \label{estimate_fourier_semigroup1_gamma_0_gamma_0}
\end{equation}
where
\begin{eqnarray}\label{kappa_1_gamma_0_gamma_0}
\rho _{1}\left( \xi \right) =\frac{\xi ^{2}}{1+\xi ^{2} }
\end{eqnarray}
 and 
 \begin{equation}\label{kappa_2_gamma_0}
\rho _{2}\left( \xi \right) =\frac{\xi ^{2}}{ 1+\xi ^{2}+\xi^4 }.
\end{equation}
 Here $C$ and $c$ are two positive constants.
\end{lemma}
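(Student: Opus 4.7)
The plan is to establish the exponential pointwise bound by constructing a Lyapunov functional $\mathcal{L}(\xi,t)$, equivalent to the basic energy $\hat{E}(\xi,t)=\tfrac{1}{2}|\hat{U}(\xi,t)|^{2}$, that satisfies a differential inequality of the form
\begin{equation*}
\frac{d}{dt}\mathcal{L}(\xi,t) \leq -c\,\rho_{*}(\xi)\,\hat{E}(\xi,t), \qquad \rho_{*}\in\{\rho_{1},\rho_{2}\},
\end{equation*}
from which the claimed bound follows by Gr\"onwall and the equivalence $\mathcal{L}\simeq\hat{E}$. The starting point is Lemma \ref{diss_Lemma}, which already supplies the dissipation $-\gamma_{1}|\hat{y}|^{2}-\gamma_{2}|\hat{\eta}|^{2}$; the task is to transfer this dissipation to the remaining components $\hat{v},\hat{u},\hat{z},\hat{\phi}$ at a cost that is controlled by $\rho_{*}(\xi)$.

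The building blocks are the standard multipliers suggested by the coupling structure of \eqref{Fourier_system_2}. Using the fourth equation, the functional $F_{1}=-\Re(\hat{y}\bar{\hat{v}})$ produces after differentiation in $t$, and use of the first equation to eliminate $\hat{v}_t$, a term $-|\hat{v}|^{2}$ together with cross terms controlled by $|\hat{y}|^{2}$, $|\hat{z}|^{2}$ and $\xi^{2}|\hat{u}|^{2}$; similarly the sixth equation yields $F_{2}=-\Re(\hat{\eta}\bar{\hat{v}})$, contributing an extra multiple of $-|\hat{v}|^{2}$. The multiplier $F_{3}=\Re(i\xi\hat{u}\bar{\hat{v}})$ produces $\xi^{2}(|\hat{u}|^{2}-|\hat{v}|^{2})$ modulo $\hat{y},\hat{\eta},\hat{\phi}$ remainders, while $F_{4}=\Re(i\xi\hat{\phi}\bar{\hat{\eta}})$ and $F_{5}=\Re(i\xi\hat{z}\bar{\hat{y}})$ extract dissipation on $\hat{\phi}$ and $\hat{z}$ respectively (modulo the speeds $k$ and $a$).

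I would then set $\mathcal{L}=N\hat{E}+\sum_{j}\alpha_{j}(\xi)F_{j}$, with $N$ large and frequency-dependent weights $\alpha_{j}(\xi)$ chosen so that $|\mathcal{L}-N\hat{E}|\leq \tfrac{N}{2}\hat{E}$, which secures $\mathcal{L}\simeq\hat{E}$. A careful balancing of all cross terms via Young's inequality, absorbing the $|\hat{y}|^{2}$ and $|\hat{\eta}|^{2}$ pieces into the $\gamma_{i}$-dissipation, is then expected to yield
\begin{equation*}
\frac{d}{dt}\mathcal{L} \leq -c\,\frac{\xi^{2}}{1+\xi^{2}}\,\hat{E} \quad (a=1), \qquad \frac{d}{dt}\mathcal{L}\leq -c\,\frac{\xi^{2}}{1+\xi^{2}+\xi^{4}}\,\hat{E} \quad (a\neq 1),
\end{equation*}
and Gr\"onwall's inequality concludes. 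The low-frequency factor $\xi^{2}$ is forced by the two derivatives needed in $F_{3}$--$F_{5}$ to reach $|\hat{u}|^{2}$, $|\hat{z}|^{2}$, $|\hat{\phi}|^{2}$, while the behaviour at infinity is governed by how large the weights $\alpha_{j}(\xi)$ must be to keep cross terms controlled.

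The principal obstacle is the case $a\neq 1$. There the coupling between the $(v,u)$-wave (speed $1$) and the $(z,y)$-wave (speed $a$) is mismatched, and the multiplier $F_{5}$, which transfers $\hat{y}$-dissipation to $\hat{z}$ via the product $i\xi\hat{z}\bar{\hat{y}}$, produces an obstinate remainder proportional to $(1-a^{2})\xi^{2}\Re(\hat{z}\bar{\hat{v}})$ that cannot be absorbed without trading two extra powers of $\xi$ in the high-frequency regime; this is exactly the Timoshenko-type obstruction responsible for the extra factor $\xi^{4}$ in $\rho_{2}$. Handling it should require an auxiliary multiplier of the type $F_{6}=\Re(\hat{z}\bar{\hat{v}})$ and a delicate tuning of the weights $\alpha_{j}(\xi)\sim (1+\xi^{2})^{-1}$ so that the remainder is dominated uniformly in $\xi$. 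In contrast, when $a=1$ these mismatched terms disappear, so no high-frequency loss occurs and one recovers the stronger rate $\rho_{1}$. Once this balance is achieved in each case, the differential inequality and the pointwise estimate follow directly.
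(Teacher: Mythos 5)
Your proposal follows essentially the same route as the paper: a Fourier energy method in which cross-term multipliers (the paper's $\mathscr{F}$, $\mathscr{K}$, $\mathscr{P}$ play the role of your $F_1$--$F_5$) transfer the $\gamma_1|\hat{y}|^2+\gamma_2|\hat{\eta}|^2$ dissipation to the remaining components, combined into a Lyapunov functional equivalent to a frequency-weighted energy (the paper puts the weight $(1+\xi^2)$ or $(1+\xi^2+\xi^4)$ on $\hat{E}$ rather than $(1+\xi^2)^{-1}$ on the multipliers, which is the same thing), and closed by Gr\"onwall. You also correctly identify the speed-mismatch remainder for $a\neq 1$ as the source of the extra $\xi^4$ in $\rho_2$ (in the paper it surfaces as the term $(a^2-1)\mathrm{Re}(i\xi^3\hat{u}\bar{\hat{y}})$ in the identity for $\mathscr{F}$), so the plan is sound and matches the paper's proof.
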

\begin{proof}
The proof is based on a  delicate Fourier energy method. We do the proof in two main steps. 
\begin{description}
\item[ Step 1. Exhibiting dissipation of the other  terms] 
\end{description}
As we have seen from the estimate \eqref{estimate derive}, only two components of the solution  are damped through the energy dissipation inequality. So, our main goal in this first step is to find some appropriate functionals, that  give some dissipative terms to the other components of the vector solution. Indeed, 
we have from \cite{Said_Soufyane_2014_1}, the identity 
\begin{eqnarray}\label{dF_7_dt}
&&\frac{d}{dt}\mathscr{F}(\xi,t)+a^2l^2\xi
^{2}\left( \left\vert \hat{z}\right\vert ^{2}-\left\vert \hat{y}\right\vert
^{2}\right)-\xi^2\vert \hat{y}\vert^2+\xi^2\vert \hat{v}\vert^2\notag\\
&=&-al^2\gamma _{1}Re\left( i\xi \bar{\hat{z}}%
\hat{y}\right)+\gamma_2alRe(i\xi\hat{\eta}\bar{\hat{z}})+(1-a^2)l\xi^2Re(\hat{y}\bar{\hat{\eta}})\notag\\
&&+(a^2-1)Re(i\xi^3\hat{u}\bar{\hat{y}}),
\end{eqnarray}
where 
\begin{equation}\label{F_7_function}
\mathscr{F}(\xi,t):=la\left\{ lRe\left( i\xi \hat{y}\bar{\hat{z}}\right)+Re(i\xi\hat{z}\bar{\hat{\eta}})\right\} -\xi^2\left\{Re(\hat{v}\bar{\hat{y}})+Re(a\bar{\hat{z}}\hat{u})\right\}. 
\end{equation}
Also, from  \cite{Said_Soufyane_2014_1}, we have 
\begin{eqnarray}\label{dF_8_dt}
&&\frac{d}{dt}\mathscr{K}(\xi,t)+k_0\xi ^{2}(\vert \hat{\phi}\vert ^{2}-|\hat{\eta}|^{2})\notag\\
&=&
Re(i\xi lk_0\hat{u}%
\bar{\hat{\eta}})-\gamma_2Re(i\xi\hat{\eta}\bar{\hat{\phi}})
+Re%
(l a\xi ^{2}\hat{z}\bar{\hat{\phi}}) +lRe( \gamma _{1}i\xi \bar{\hat{\phi}}%
\hat{y})   \notag \\
&&-lk_{0}\xi ^{2}Re( \hat{\eta}\bar{\hat{y}}) -Re%
\left( l^2k_{0}i\xi \bar{\hat{y}}\hat{u}\right),
\end{eqnarray}
where 
\begin{equation}\label{F_8_Functional}
\mathscr{K}(\xi,t)= Re(- i\xi \hat{\phi}\bar{\hat{\eta}})+lRe (-i\xi \hat{y}\bar{\hat{\phi}}). 
\end{equation}
A simple application of Young's inequality gives
\begin{eqnarray}\label{dF_8_dt_main}
&&\frac{d}{dt}\mathscr{K}(\xi,t)+(k_0-\epsilon_1)\xi ^{2}\vert \hat{\phi}\vert ^{2}\notag\\
&\leq & C(\epsilon_1,\epsilon_1^\prime)(1+\xi^2)(\vert\hat{\eta}\vert^{2}+\vert\hat{y}\vert^{2})+C(\epsilon_1)\xi^2\vert\hat{z}\vert^{2}+\epsilon_1^\prime \xi^2\vert\hat{u}\vert^{2},
\end{eqnarray}
where $\epsilon_1,\epsilon_1^\prime$ are  arbitrary small positive constants. 

Concerning \eqref{dF_7_dt}, we have the following estimates:
 \begin{itemize}
\item For $a=1$, we have as above, for any $\epsilon_2,\epsilon_2^\prime$ positive:
\begin{eqnarray}\label{dF_7_dt_main_1}
&&\frac{d}{dt}\mathscr{F}(\xi,t)+(a^2l^2-\epsilon_2)\xi
^{2} \left\vert \hat{z}\right\vert ^{2}
+\xi^2\vert \hat{v}\vert^2\notag\\
&\leq&C(\epsilon_2)(1+\xi^2)(\left\vert \hat{\eta}\right\vert ^{2}+\left\vert \hat{y}\right\vert ^{2}).
\end{eqnarray}
\item For $a\neq 1$, we obtain, instead of \eqref{dF_7_dt_main_1},
\begin{eqnarray}\label{dF_7_dt_main_2}
&&\frac{d}{dt}\mathscr{F}(\xi,t)+(a^2l^2-\epsilon_2)\xi
^{2} \left\vert \hat{z}\right\vert ^{2}
+\xi^2\vert \hat{v}\vert^2\notag\\
&\leq&C(\epsilon_2,\epsilon_2^\prime)(1+\xi^2+\xi^4)(\left\vert \hat{\eta}\right\vert ^{2}+\left\vert \hat{y}\right\vert ^{2})+\epsilon_2^\prime\xi^2\left\vert \hat{u}\right\vert ^{2},
\end{eqnarray}
where we used the estimate:
\begin{eqnarray*}
\left\vert(a^2-1)Re(i\xi^3\hat{u}\bar{\hat{y}})\right\vert \leq \epsilon_2^\prime\xi^2\vert \hat{u}\vert^2+C(\epsilon_2^\prime)\xi^4\vert \hat{y}\vert^2.
\end{eqnarray*}
\end{itemize}

Next, multiplying the first equation in (\ref{Fourier_system_2}) by $i\xi 
\bar{\hat{u}}$, the second equation by $-i\xi \bar{\hat{v}}$, 
 adding the results and  taking the real part, we get 
\begin{eqnarray}
&&\frac{d}{dt}Re\left( i\xi \hat{v}\bar{\hat{u}}\right) +\xi ^{2}(\left\vert \hat{u}\right\vert
^{2}-\left\vert \hat{v}\right\vert ^{2})\notag\\
&&=-Re\left( i\xi \hat{y}\bar{%
\hat{u}}\right)  
-Re\left( i\xi l\hat{\eta}\bar{\hat{u}}\right)-Re%
(i\xi k_0 l\hat{\phi}\bar{\hat{v}}) .
\label{first_term_main_new_1}
\end{eqnarray}

Multiplying the first equation in \eqref{Fourier_system_2} by $-\bar{\hat{\eta}}
$ and the sixth equation by $-\bar{\hat{v}}$, then taking the real
part after adding the two results, we obtain
\begin{eqnarray}\label{Estimate_Main_Term_new_gamma_0_gamma_2}
&&-\frac{d}{dt}Re\left( \hat{v}%
\bar{\hat{\eta}}\right)+l\left\vert \hat{v}\right\vert ^{2}-l\left\vert \hat{\eta}\right\vert ^{2}\notag\\
&&=-Re(i\xi \bar{\hat{\eta}}\hat{u})+%
Re(\hat{y}\bar{\hat{\eta}})-Re(i\xi k_0\hat{\phi}\bar{\hat{v}})+\gamma_2Re( \bar{\hat{v}}\hat{\eta}). 
\end{eqnarray}
Summing up $\eqref{first_term_main_new_1}+l \eqref{Estimate_Main_Term_new_gamma_0_gamma_2}$, we get 
\begin{eqnarray*}
&&\frac{d}{dt}\mathscr{P}(\xi,t)+\xi ^{2}(\left\vert \hat{u}\right\vert
^{2}-\left\vert \hat{v}\right\vert ^{2})+l^2\left\vert \hat{v}\right\vert ^{2}-l^2\left\vert \hat{\eta}\right\vert ^{2}\notag\\
&=&-Re\left( i\xi \hat{y}\bar{%
\hat{u}}\right)  
-2Re%
(i\xi k_0 l\hat{\phi}\bar{\hat{v}})+%
lRe(\hat{y}\bar{\hat{\eta}})+l\gamma_2Re( \bar{\hat{v}}\hat{\eta}), 
\end{eqnarray*}
where 
\begin{equation}\label{P_Functional}
\mathscr{P}(\xi,t)=Re\left( i\xi \hat{v}\bar{\hat{u}}\right)-lRe\left( \hat{v}%
\bar{\hat{\eta}}\right). 
\end{equation}
Applying Young's inequality, we obtain for any $\epsilon_3,\epsilon_4>0$, 
\begin{eqnarray}\label{dP_dt_Main}
&&\frac{d}{dt}\mathscr{P}(\xi,t)+\xi ^{2} (1-\epsilon_3)\left\vert \hat{u}\right\vert
^{2}+(l^2-\epsilon_4)\left\vert \hat{v}\right\vert ^{2}\notag\\
&\leq &\xi^2\left\vert \hat{v}\right\vert ^{2} +C(\epsilon_3,\epsilon_4) (|\hat{y}|^2+|\hat{\eta}|^2) +C(\epsilon_4)\xi^2|\hat{\phi}|^2. 
\end{eqnarray}
\begin{description}
\item[Step 2.  Building the appropriate Lyapunov functional] 
\end{description}
In this step, we make the appropriate combination of the above obtained functionals to build a Lyapunov functional $\mathscr{L}(\xi,t)$. To construct  this functional, we need to take into account two main things.  First, this functional should satisfy the estimate \eqref{L_6_main} and  second, it should verify  another estimate of the form 
\begin{equation*}
c_1\sigma (\xi) \hat{E}(\xi,t)\leq \mathscr{L}(\xi,t)\leq c_2\sigma (\xi) \hat{E}(\xi,t),
\end{equation*}
where $c_1$ and $c_2$ are two positive constants and $\sigma(\xi)$ is a function depending on $\xi$ only. 

Hence, we define for 
$a=1$,  the Lyapunov functional $\mathscr{L}_1(\xi,t)$ as follows:
\begin{equation}\label{Z_6_function_gamma_0}
\mathscr{L}_1(\xi,t):=d_0(1+\xi^2)\hat{E}(\xi,t)+d_1\mathscr{F}(\xi,t)+d_2\mathscr{K}(\xi,t)+\mathscr{P}(\xi,t),
\end{equation}
where $d_0, d_1, d_2$ and $d_3$ are  positive constants that will be fixed later. 

The derivative of \eqref{Z_6_function_gamma_0} with respect to $t$ and the use of \eqref{estimate derive}, \eqref{dF_8_dt_main}, \eqref{dF_7_dt_main_1} and \eqref{dP_dt_Main} lead to 
\begin{eqnarray}\label{dL_6_dt}
&&\frac{d}{dt}\mathscr{L}_1(\xi,t)+\Big\{d_1(a^2l^2-\epsilon_2)-d_2C(\epsilon_1)\Big\}\xi
^{2} \left\vert \hat{z}\right\vert ^{2}\notag\\
&&+(d_1-1)\xi^2\vert \hat{v}\vert^2+\Big\{d_2(k_0-\epsilon_1)-C(\epsilon_4)\Big\}\xi ^{2}\vert \hat{\phi}\vert ^{2}\notag\\
&&+\Big\{(1-\epsilon_3)-d_2\epsilon_1^\prime\Big\}\xi^2\vert\hat{u}\vert^{2}+(l^2-\epsilon_4)|\hat{v}|^2
\notag\\
&\leq&\Big\{C(\epsilon_1,\epsilon_1^\prime,\epsilon_2,\epsilon_3,\epsilon_4, d_1, d_2)-d_0\min(\gamma_1,\gamma_2)\Big\}(1+\xi^2)(\left\vert \hat{\eta}\right\vert ^{2}+\left\vert \hat{y}\right\vert ^{2}).
\end{eqnarray}
We choose the constants in the above formula as follows: fix $\epsilon_1,\epsilon_2$ and $\epsilon_3$ small enough such that 
\begin{equation*}
\epsilon_1<k_0,\qquad \epsilon_2<a^2l^2,\qquad \epsilon_3<1,\qquad \epsilon_4<l^2. 
\end{equation*}
After that, we fix $d_2$ large enough such that 
\begin{equation*}
d_2>\frac{C(\epsilon_4)}{k_0-\epsilon_1}. 
\end{equation*}
Then, we select $d_1$ large enough such that 
\begin{equation*}
d_1>\max\left(1,\frac{d_2C(\epsilon_1)}{a^2l^2-\epsilon_2}\right). 
\end{equation*}
Furthermore, we pick $\epsilon_1^\prime$ small enough such that 
\begin{equation*}
\epsilon_1^\prime<\frac{1-\epsilon_3}{d_2}.
\end{equation*}
Finally, once all the above constants are fixed, we take $d_0$ large enough such that 
\begin{equation*}
d_0>\frac{C(\epsilon_1,\epsilon_1^\prime,\epsilon_2,\epsilon_3,\epsilon_4, d_1, d_2)}{\min(\gamma_1,\gamma_2)}. 
\end{equation*}
Hence,  we find a positive constant $c_0>0$, such that 
\begin{eqnarray}\label{L_6_main}
\frac{d}{dt}\mathscr{L}_1(\xi,t)+c_0\xi^2 \hat{E}(\xi,t)\leq 0,\qquad \forall t\geq0.
\end{eqnarray}
Since  
\begin{equation}\label{Equiv_U_E}
\hat{E}(\xi,t)=\frac{1}{2}\vert \hat{U}(\xi,t)\vert^2
\end{equation}
then,  for $d_0$  large enough,  there exist two positive  constants $c_1$ and $c_2$ such that for all $t\geq0$,
\begin{equation}\label{Equiv_Z_E}
c_1(1+\xi^2) \hat{E}(\xi,t)\leq \mathscr{L}_1(\xi,t)\leq c_2(1+\xi^2) \hat{E}(\xi,t).
\end{equation}
On the other hand,    there exists a constant $c_3>0$, such that 
\begin{eqnarray}\label{L_6_main_2}
\frac{d}{dt}\mathscr{L}_1(\xi,t)+c_3\frac{\xi^2 }{1+\xi^2}\mathscr{L}_1(\xi,t)\leq 0,\qquad \forall t\geq0.
\end{eqnarray}
Integrating \eqref{L_6_main_2} and using once again \eqref{Equiv_U_E} and \eqref{Equiv_Z_E}, then \eqref{estimate_fourier_semigroup1_gamma_0_gamma_0} holds for $a=1$. 


Next, for $a\neq 1$, we define  another  Lyapunov Functional 
\begin{equation}\label{Z_7_function_gamma_0}
\mathscr{L}_2(\xi,t)=d_0(1+\xi^2+\xi^4)\hat{E}(\xi,t)+d_1\mathscr{F}(\xi,t)+d_2\mathscr{K}(\xi,t)+\mathscr{P}(\xi,t).
\end{equation}
Now, arguing as above and choosing the constants exactly as before, except for the new constant $\epsilon_2^\prime$ which should be small enough, we get 
\begin{eqnarray}\label{L_7_main}
\frac{d}{dt}\mathscr{L}_2(\xi,t)+c_4\frac{\xi^2}{(1+\xi^2+\xi^4)}\hat{E}(\xi,t)\leq 0,\qquad \forall t\geq0,
\end{eqnarray}
for some $c_4>0$. Which leads to the second estimate in \eqref{estimate_fourier_semigroup1_gamma_0_gamma_0}. We omit the details. 
\end{proof}
\subsection{Decay estimates: Proof of Theorem \ref{Main_Theorem_2} }\label{Subsection_Proof_1}
In this subsection, we show the decay estimate of the $L^2$-norm of the solution of \eqref{First_order_system}.  These decay estimates are optimal,  since  they agree with the asymptotic expansion of the eigenvalues given in Subsection \ref{Section_Optimality_1}. In addition, Theorem \ref{Main_Theorem_2} improves the result of Theorem 6.1 in \cite{Said_Soufyane_2014_1}. 

To  show \eqref{Decay_U_estimate_L_2_1_gamma_0_1_gamma_2}, we have from   \eqref{kappa_1_gamma_0_gamma_0}   that
\begin{equation}\label{rho_2_behavior_new}
\rho_1(\xi)\geq\left\{
\begin{array}{ll}
    c\xi^2, & \text{if } \xi\leq 1, \vspace{0.2cm} \\
   c,  &   \text{if } \xi\geq 1.
\end{array}
\right.
\end{equation} 
Applying the Plancherel theorem and using the first estimate in  (\ref%
{estimate_fourier_semigroup1_gamma_0_gamma_0}), we obtain%
\begin{eqnarray}
\left\Vert \partial _{x}^{j}U\left( t\right) \right\Vert _{L^{2}}^{2}
&=&\int_{\mathbb{R} }\vert \xi \vert ^{2j}\vert \hat{U}%
\left( \xi ,t\right) \vert ^{2}d\xi  \notag \\
&\leq &C\int_{\mathbb{R} }\left\vert \xi \right\vert ^{2j}e^{-c\rho_1 \left(
\xi \right) t}\vert \hat{U}\left( \xi ,0\right) \vert ^{2}d\xi
\notag \\
&=&C\int_{\left\vert \xi \right\vert \leq 1}\left\vert \xi \right\vert
^{2j}e^{-c\rho_1 \left( \xi \right) t}\vert \hat{U}\left( \xi ,0\right)
\vert ^{2}d\xi +C\int_{\left\vert \xi \right\vert \geq 1}\left\vert
\xi \right\vert ^{2j}e^{-c\rho_1 \left( \xi \right) t}\vert \hat{U}\left(
\xi ,0\right) \vert ^{2}d\xi  \notag \\
&:=&I_{1}+I_{2}.  \label{deron_U_equality}
\end{eqnarray}%
Exploiting (\ref{rho_2_behavior_new}),
 we infer that
\begin{equation}
I_{1}\leq C\Vert \hat{U}_{0}\Vert _{L^\infty }^{2}\int_{\left\vert
\xi \right\vert \leq 1}\left\vert \xi \right\vert ^{2j}e^{-c\xi
^{2}t}d\xi \leq C\left( 1+t\right) ^{-\frac{1}{2}\left( 1+2j\right)
}\left\Vert U_{0}\right\Vert _{L^{1}}^{2},  \label{I_1_estimate}
\end{equation}
where we have used the inequality%
\begin{equation}\label{Integral_Inequality}
\int_{0}^{1}\left\vert \xi \right\vert ^{\sigma }e^{-c\xi ^{2}t}d\xi \leq
C\left( 1+t\right) ^{-\left( \sigma +1\right) /2}.
\end{equation}%
In the high frequency region ($|\xi|\geq 1$), we have 
\begin{eqnarray*}
I_2&\leq& e^{-ct}\int_{\left\vert \xi \right\vert \geq 1}\left\vert \xi
\right\vert ^{2j}\vert \hat{U}\left( \xi ,0\right)
\vert ^{2}d\xi\\
&\leq& e^{-ct} \Vert \partial_x^jU_0\Vert^2_{L^2}
\end{eqnarray*}
which leads to the estimates in \eqref{Decay_U_estimate_L_2_1_gamma_0_1_gamma_2}. 

Second, assume that $a\neq 1$. As above,%
\begin{eqnarray}
\left\Vert \partial _{x}^{j}U\left( t\right) \right\Vert _{L^2}^{2}
&\leq&C\int_{\left\vert \xi \right\vert \leq 1}\left\vert \xi \right\vert
^{2j}e^{-c\rho _{2}\left( \xi \right) t}\vert \hat{U}\left( \xi
,0\right) \vert ^{2}d\xi +C\int_{\left\vert \xi \right\vert \geq
1}\left\vert \xi \right\vert ^{2j}e^{-c\rho _{2}\left( \xi \right)
t}\vert \hat{U}\left( \xi ,0\right) \vert ^{2}d\xi  \notag \\
&=&L_{1}+L_{2}.  \label{Main_identity_a_2}
\end{eqnarray}%
Now using the second estimate in (\ref{estimate_fourier_semigroup1_gamma_0_gamma_0}) and the
fact that $\rho _{2}(\xi )\geq \frac{1}{4}\xi ^{2}$ for $\left\vert \xi
\right\vert \leq 1$, we have by the same method as in the proof of the
estimate of $I_{1},$%
\begin{equation}
L_{1}\leq C\left( 1+t\right) ^{-1/2-j}\left\Vert U_{0}\right\Vert
_{L^{1}}^{2}.  \label{L_1_estimate}
\end{equation}%
To estimate the term $L_{2}$, we use the inequality $\rho _{2}(\xi )\geq
c\xi ^{-2}$ for $\left\vert \xi \right\vert \geq 1$ to obtain
\begin{eqnarray}
L_{2}
&\leq &C\sup_{\left\vert \xi \right\vert \geq 1}\left( \left\vert \xi
\right\vert ^{-2\ell}e^{-c\xi ^{-2}t}\right) \int_{\left\vert \xi \right\vert
\geq 1}\left\vert \xi \right\vert ^{2\left( j+\ell\right) }\vert \hat{U}%
\left( \xi ,0\right) \vert ^{2}d\xi  \notag \\
&\leq &C\left( 1+t\right) ^{-\ell}\left\Vert \partial
_{x}^{j+\ell}U_{0}\right\Vert _{L^2}^{2}.  \label{L_2_estimate}
\end{eqnarray}%
Inserting the estimates (\ref{L_1_estimate}) and (\ref{L_2_estimate}) into (%
\ref{Main_identity_a_2}), then (\ref{Decay_U_estimate_L_2_1_section_6_gamma_2}) is obtained.
This finishes the proof of Theorem \ref{Main_Theorem_2}.

\subsection{Optimality of the decay rates}\label{Section_Optimality_1}
To prove the optimality of the decay rate in  Theorem \ref{Main_Theorem_2}, we use the following proposition based on  eigenvalues expansion.

\begin{proposition}\label{Propo_1}
Let $\lambda_j(\zeta),\, 1\leq j\leq 6$ be the eigenvalues of $\Phi(\zeta)$.
Then as $|\xi|\rightarrow0$ 
\begin{equation}\label{Estimate_Eigenvalues_0}
Re(\lambda_j)(i\xi)=
\left\{
\begin{array}{ll}
-\dfrac{a^2l^2}{\gamma_1l^2+\gamma_2}|\xi|^2+O(|\xi|^3),&\qquad \text{for}\qquad j=1,\vspace{0.3cm}\\
-Re(\beta_j)|\xi|^2+O(|\xi|^3),&\qquad \text{for}\qquad j=2,3,\vspace{0.3cm}\\
Re(r_j)+O(|\xi|),&\qquad \text{for}\qquad j=4,5,6.
\end{array}
\right. 
\end{equation}

For  $|\xi|\rightarrow+\infty$

\begin{itemize}
\item For $a=1$, we get 
\begin{equation}\label{Estimate_Eigenvalues_infinity_1}
Re(\lambda_j)(i\xi)=
\left\{
\begin{array}{ll}
Re(\delta_j)+O(|\xi|^{-1}),&\qquad \text{for}\qquad j=1,2.
\vspace{0.2cm}\\
-\dfrac{\gamma_1}{2}+O(|\xi|^{-1}),&\qquad \text{for}\qquad j=3,4,\vspace{0.2cm}\\
-\dfrac{\gamma_2}{2}+O(|\xi|^{-1}),&\qquad \text{for}\qquad j=5,6.
\end{array}
\right. 
\end{equation}
\item For $a\neq 1$, we have 
\begin{equation}\label{Estimate_Eigenvalues_infinity_2}
Re(\lambda_j)(i\xi)=
\left\{
\begin{array}{ll}
-\kappa_j|\xi|^{-2}+O(|\xi|^{-3}),&\qquad \text{for}\qquad j=1,2\vspace{0.2cm}\\
-\dfrac{\gamma_1}{2}+O(|\xi|^{-1}),&\qquad \text{for}\qquad j=3,4,\vspace{0.2cm}\\
-\dfrac{\gamma_2}{2}+O(|\xi|^{-1}),&\qquad \text{for}\qquad j=5,6.
\end{array}
\right. 
\end{equation}
\end{itemize}

\end{proposition}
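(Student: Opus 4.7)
The strategy is to compute the characteristic polynomial $f(\lambda,\zeta)=\det(\lambda I-\Phi(\zeta))$ explicitly for $\gamma_1,\gamma_2>0$ and extract Puiseux--Taylor expansions of its six roots in the two regimes $|\xi|\to 0$ and $|\xi|\to\infty$, with $\zeta=i\xi$. Expanding along the third row of $\lambda I-\Phi(\zeta)=\lambda I+L+\zeta A$, whose only nonzero entries are $\lambda$ at position $(3,3)$ and $-a\zeta$ at position $(3,4)$, gives the useful factorization
\[
f(\lambda,\zeta)=\lambda\det(M_{33})+a\zeta\det(M_{34}),
\]
where $M_{33},M_{34}$ are the $5\times 5$ minors. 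The symmetry $f(\lambda,\zeta)=f(\lambda,-\zeta)$ makes every coefficient a polynomial in $\zeta^2$, which simplifies the bookkeeping.

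\textbf{Low-frequency expansion.} At $\zeta=0$ the matrix $\Phi(0)=-L$ decomposes by inspection into three independent pieces: the scalar equation $z_t=0$ contributing a simple zero eigenvalue; the non-dissipative oscillator $u_t=lk\phi,\ \phi_t=-lku$ contributing the pair $\pm ilk$; and the remaining $3\times 3$ block on $(v,y,\eta)$ with characteristic polynomial
\[
\mu^3+(\gamma_1+\gamma_2)\mu^2+(\gamma_1\gamma_2+1+l^2)\mu+(\gamma_2+l^2\gamma_1),
\]
whose three roots $r_4,r_5,r_6$ have negative real parts by Routh--Hurwitz. For these branches the implicit function theorem applied at $(\lambda,\zeta)=(r_j,0)$ immediately gives $\lambda_j(i\xi)=r_j+O(|\xi|)$, proving the third line of \eqref{Estimate_Eigenvalues_0}. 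For the branches starting at $\{0,\pm ilk\}$ I would substitute the ansatz $\lambda_j(\zeta)=\lambda_j^{(0)}+\alpha_j\zeta+\beta_j\zeta^2+O(|\zeta|^3)$ into $f=0$ and match powers of $\zeta$. For the $\lambda_1^{(0)}=0$ branch, the $\zeta$-order equation reads $\alpha_1\det(M_{33})|_{(0,0)}+a\det(M_{34})|_{(0,0)}=0$; a short check gives $\det(M_{33})|_{(0,0)}=l^2k^2(l^2\gamma_1+\gamma_2)\neq 0$ and $\det(M_{34})|_{(0,0)}=0$ (the remaining $z$-column is zero), hence $\alpha_1=0$. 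Matching at order $\zeta^2$ then determines $\beta_1$ in closed form, and explicit computation yields $\beta_1=\frac{a^2l^2}{\gamma_1 l^2+\gamma_2}$. The same routine applied to the $\pm ilk$ branches provides $\beta_{2,3}$; setting $\zeta=i\xi$ converts $\beta_j\zeta^2$ into $-\beta_j|\xi|^2$, producing the first two lines of \eqref{Estimate_Eigenvalues_0}.

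\textbf{High-frequency expansion.} Setting $\lambda=i\xi\mu$ and dividing $f(\lambda,i\xi)$ by $(i\xi)^6$ yields
\[
g(\mu,\epsilon):=\det(\mu I+A+\epsilon L),\qquad \epsilon:=1/(i\xi),
\]
with $g(\mu,0)=(\mu^2-1)(\mu^2-a^2)(\mu^2-k^2)$. Hence the leading symbols $\mu^{(0)}$ are the eigenvalues $\pm1,\pm a,\pm k$ of $-A$.

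\emph{Case $a\neq 1$.} The six values are simple, so the first-order correction is the scalar $\mu^{(1)}=-r_{\mu^{(0)}}^T L r_{\mu^{(0)}}$ computed on the orthonormal eigenvectors $r_{\pm 1}=\tfrac{1}{\sqrt 2}(1,\pm1,0,0,0,0)^T$, $r_{\pm a}=\tfrac{1}{\sqrt 2}(0,0,1,\pm1,0,0)^T$, $r_{\pm k}=\tfrac{1}{\sqrt 2}(0,0,0,0,1,\pm1)^T$ of $-A$ coming from the three $2\times 2$ blocks of $A$. A direct evaluation gives $\mu^{(1)}=-\gamma_1/2$ on the $\pm a$ branches, $\mu^{(1)}=-\gamma_2/2$ on the $\pm k$ branches, and $\mu^{(1)}=0$ on the $\pm 1$ branches (whose support avoids both diagonal dissipative entries $L_{44},L_{66}$). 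Restoring $\lambda=i\xi\mu$ produces the $O(|\xi|^0)$ real-part corrections of \eqref{Estimate_Eigenvalues_infinity_2}; on the $\pm 1$ branches one must push the expansion to order $1/\xi^2$, and the real part of $\mu^{(2)}/(i\xi)^2$ supplies $-\kappa_j|\xi|^{-2}$.

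\emph{Case $a=1$.} Now $(\mu^2-1)(\mu^2-a^2)$ collapses to $(\mu^2-1)^2$, so $\mu^{(0)}=\pm 1$ is a double root of $g(\cdot,0)$ and scalar perturbation fails. I would use degenerate perturbation theory, reducing to the $2\times 2$ matrix $-P_\pm L P_\pm$ on the two-dimensional eigenspaces $\operatorname{span}\{r_{\pm 1},r_{\pm a}|_{a=1}\}$. Explicit evaluation of the four entries shows that, in this basis, $-P_\pm L P_\pm$ equals $\left(\begin{smallmatrix}0&-1/2\\1/2&-\gamma_1/2\end{smallmatrix}\right)$ (up to sign conventions), whose two eigenvalues $\delta_1,\delta_2$ are the roots of $\mu^2+\tfrac{\gamma_1}{2}\mu+\tfrac14=0$ and produce the $O(1)$ corrections of \eqref{Estimate_Eigenvalues_infinity_1}. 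The $\pm k$ branches are non-degenerate and still yield $-\gamma_2/2$.

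\textbf{Main obstacle.} Everything above is routine once the factorizations of $f(\lambda,0)$ and $g(\mu,0)$ are in hand. The single delicate point is the $a\neq 1$ extraction of the constants $\kappa_j$: it requires pushing the high-frequency expansion two orders further on the $\pm 1$ branches, where the $O(\xi^0)$ and $O(1/\xi)$ real parts both vanish. Genuine damping only emerges at order $1/\xi^2$ through second-order mixing of the $\pm 1$ branches with the rest of the spectrum via the off-diagonal entries of $L$, and this cross-term computation is the technical heart of the proof.
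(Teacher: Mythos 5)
Your overall route is the same as the paper's: compute the characteristic polynomial $\det(\lambda I-\Phi(\zeta))$, expand the six roots at $\zeta=0$ starting from the spectrum of $-L$ (namely $0$, $\pm ikl$, and the three roots of the cubic $\mu^3+(\gamma_1+\gamma_2)\mu^2+(1+l^2+\gamma_1\gamma_2)\mu+\gamma_1l^2+\gamma_2$), and at infinity pass to $\mu=\lambda/\zeta$ and the polynomial $\det(\mu I+A+\zeta^{-1}L)$ with leading symbols $\pm1,\pm a,\pm k$. The one genuine methodological difference is at high frequency: the paper obtains the corrections $\mu_j^{(1)},\mu_j^{(2)},\dots$ purely by matching coefficients in the characteristic polynomial, whereas you use Rayleigh--Schr\"odinger perturbation theory on $-(A+\epsilon L)$ with the explicit eigenvectors of $-A$, including the degenerate $2\times2$ reduction when $a=1$. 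Your reduced matrix $\left(\begin{smallmatrix}0&-1/2\\ 1/2&-\gamma_1/2\end{smallmatrix}\right)$ is correct and its characteristic equation $\mu^2+\tfrac{\gamma_1}{2}\mu+\tfrac14=0$ reproduces exactly the paper's $\delta_j=\tfrac14(-\gamma_1\pm\sqrt{\gamma_1^2-4})$; this is arguably more transparent than the paper's coefficient matching, at the cost of needing eigenvectors.

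The one place your plan as written would not close is the extraction of $\kappa_j$ for $a\neq1$, and it is precisely the step you flag as the technical heart. Since $\lambda=i\xi\,\mu(\epsilon)$ with $\epsilon=1/(i\xi)$, the term $\mu^{(2)}\epsilon^2$ contributes $\mu^{(2)}/(i\xi)$ to $\lambda$, which is only $O(|\xi|^{-1})$ and, because $\mu^{(2)}=\pm\frac{l^2(1-a^2)+1}{2(a^2-1)}$ is real, is purely imaginary. The first nonzero real-part contribution at order $|\xi|^{-2}$ comes from $\mu^{(3)}\epsilon^3$, i.e.\ $\mu^{(3)}/(i\xi)^2=-\mu^{(3)}\xi^{-2}$, so you must push the perturbation series one order further than stated (to third order in $\epsilon$) and also record that $\mu^{(2)}$ is real so that no spurious $O(|\xi|^{-1})$ real part appears. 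The paper's value is $\kappa_j=\mu_j^{(3)}=\frac{(a^2-1)^2l^2\gamma_2+\gamma_1}{2(a^2-1)^2}$. Finally, note that the paper's proof also establishes the sign information $Re(r_j)<0$, $Re(\beta_j)>0$ (via the inequality $AC+BD<0$), $Re(\delta_j)<0$ and $\kappa_j>0$, which is what makes the expansions useful for optimality; your Routh--Hurwitz argument handles $Re(r_j)<0$ cleanly, but you should add the corresponding check for $Re(\beta_j)$ if the expansions are to feed into the decay estimates.
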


\begin{proof}

For $\gamma_i> 0,\, i=1,2$, the  system (\ref{Bresse_main_system_2})-(\ref{Initial_data_2}) is equivalent
to  (\ref{First_order_system}) with%
\begin{equation*}
U=\left(
\begin{array}{c}
v \\
u \\
z \\
y \\
\phi \\
\eta%
\end{array}%
\right) ,\,\ A=\left(
\begin{array}{cccccc}
0 & -1 & 0 & 0 & 0 & 0 \\
-1 & 0 & 0 & 0 & 0 & 0 \\
0 & 0 & 0 & -a & 0 & 0 \\
0 & 0 & -a & 0 & 0 & 0 \\
0 & 0 & 0 & 0 & 0 & -k \\
0 & 0 & 0 & 0 & -k & 0%
\end{array}%
\right) ,\,\ L=\left(
\begin{array}{cccccc}
0 & 0 & 0 & 1 & 0 & l \\
0 & 0 & 0 & 0 & -lk & 0 \\
0 & 0 & 0 & 0 & 0 & 0 \\
-1 & 0 & 0 & \gamma_1  & 0 & 0 \\
0 & lk & 0 & 0 & 0 & 0 \\
-l & 0 & 0 & 0 & 0 & \gamma_2%
\end{array}%
\right)
\end{equation*}%
and $U_{0}=\left( v_{0},u_{0},z_{0},y_{0},\phi _{0},\eta _{0}\right) ^{T}$. Observe that, since $L$ is not symmetric, the general theory for hyperbolic systems does not apply. 

Recall that 
\begin{equation}\label{Phi_Matrix}
\Phi(\zeta)=-(L+\zeta A),\qquad \zeta=i\xi\in \C. 
\end{equation}
Let us denote by $\lambda_j(\zeta),\, 1\leq j\leq 6$ the eigenvalues of $\Phi(\zeta)$, then we  compute the characteristic equation as 
\begin{eqnarray}\label{Characteristic_Poly_2}
\det{\left(\lambda I-\Phi(\zeta)\right)}&=&\lambda^6+(\gamma_1+\gamma_2)\lambda^5+\Big\{(k^2+1)(l^2-\zeta ^2) +\gamma_1\gamma_2+1-a^2\zeta^2\Big\}\lambda^4\notag\\
&&+\Big\{ \gamma_1(k^2+1)(  l^2-\zeta ^2 )+\gamma_2\left((k^2l^2+1)-(1+a^2)\zeta^2\right)\Big\}\lambda^3\notag\\
&&+\left[\gamma_1\gamma_2(k^2l^2-\zeta^2)+(l^2-\zeta^2) \Big\{k^2(l^2-\zeta^2)+\Big(k^2-a^2(k^2+1)\zeta^2\Big)\Big\}\right]\lambda^2\\
&&+\Big\{\gamma_1 k^2( l^2-\zeta ^2)^2+ k^2 l^2 \gamma_2  - a^2 k^2 l^2 \gamma_2 \zeta^2  
+ a^2 \gamma_2 \zeta^4\Big\}\lambda-a^2k^2\zeta^2(l^2-\zeta^2)^2.\notag
\end{eqnarray}
It is legitimate to do an asymptotic expansion of the eigenvalues.
Indeed, since the dependence on $\zeta$  of $\Phi$ is analytic then  by \cite[p.63]{Kat76_2} the eigenvalues depends  also analyticly on $\zeta$. 
\begin{itemize}
\item \textbf{Behavior of $\lambda_j(\zeta)$
 when $|\zeta|\rightarrow 0$.}
\end{itemize}

First, when $|\zeta|\rightarrow 0$, then $\lambda_j(\zeta)$ has the following asymptotic expansion: 
\begin{equation}\label{Asymptotic_Expansion_2}
\lambda_j(\zeta)=\lambda_j^{(0)}+\lambda_j^{(1)}\zeta+\lambda_j^{(2)}\zeta^2+...,\qquad \qquad 1\leq j\leq 6.
\end{equation}
Notice that $\lambda_j^{(0)}$ are the eigenvalues of the matrix $-L$ and satisfy, 
with $y=\lambda_j^{(0)}$, the equation \begin{equation*}
y\left(y^2+k^2 l^2\right) \left(y^3+(\gamma_1+\gamma_2) y^2+(l^2+1+\gamma_1\gamma_2)y+\gamma_1 l^2+\gamma_2\right)=0. 
\end{equation*}
Consequently,  we have from the above equation that 
\begin{equation*}
\left\{
\begin{array}{ll}
\lambda_j^{(0)}=0,&\qquad \text{for}\qquad j=1,\vspace{0.2cm}\\
\lambda_j^{(0)}=\pm ikl,&\qquad \text{for}\qquad j=2,3,\vspace{0.2cm}\\
\lambda_j^{(0)}=r_j,&\qquad \text{for}\qquad j=4,5,6,
\end{array}
\right. 
\end{equation*}
where $r_j$ are the solutions of the algebraic equation 
\begin{equation}\label{Equation_1_Roots_2}
g(X)=X^3+(\gamma_1+\gamma_2) X^2+(l^2+1+\gamma_1+\gamma_2)X+\gamma_1 l^2+\gamma_2=0. 
\end{equation}
It is well known that an algebraic equation of an odd degree with real coefficients has at least
one real root $r_1$.  Now, in order to know the location of $r_1$, we consider the equation  \eqref{Equation_1_Roots_2} with $X\in \R$. 
Then, it is clear that 
\begin{equation*}
g(-(\gamma_1+\gamma_2))g(0)=-(\gamma_1l^2+\gamma_2)\left(\gamma_1^2 + \gamma_2 (l^2 +\gamma_2) + \gamma_1 (1 + 2 \gamma_2)\right)<0. 
\end{equation*}
Therefore,  equation \eqref{Equation_1_Roots_2} has at least one real root $X=r_1$ in the interval $(-(\gamma_1+\gamma_2),0)$. In this case, we may rewrite equation \eqref{Equation_1_Roots_2} in the form 
\begin{equation}\label{Poly_Equation_2}
g(X)=(X-r_1)\left(X^2+(\gamma_1+\gamma_2+r_1)X+l^2+1+\gamma_1+\gamma_2+(\gamma_1+\gamma_2)r_1+r_1^2\right).
\end{equation}
Now, let us  denote by $r_2$ and $r_3$, the other two roots. Then, we have 
\begin{equation*}
r_1+r_2+r_3=-(\gamma_1+\gamma_2),
\end{equation*}
and
\begin{equation*}
r_1r_2r_3=-(\gamma_1l^2+\gamma_2).
\end{equation*}
Since $r_1$ is a real root, then the coefficients of \eqref{Poly_Equation_2} are real and therefore $$Re(r_2)=Re(r_3).$$ 
This implies that 
\begin{equation*}
Re(r_2)=Re(r_3)=-\frac{1}{2}(r_1+\gamma_1+\gamma_2)<0.
\end{equation*}
If $r_2$ and $r_3$ are real, then they satisfy
$$r_2+r_3=-(\gamma_1+\gamma_2+r_1)<0, \quad \mbox{ and }\quad  r_2r_3=-\frac{(\gamma_1l^2+\gamma_2)}{r_1}>0,$$  which implies $r_2,r_3<0.$

Now, using equation \eqref{Characteristic_Poly_2} and \eqref{Asymptotic_Expansion_2}, by equating coefficients  of like powers of $\zeta$, we obtain 
\begin{equation*}
\left\{
\begin{array}{ll}
\lambda_j^{(1)}=0,&\qquad \text{for}\qquad j=1,2,3\vspace{0.2cm}\\
\lambda_j^{(2)}=\dfrac{a^2l^2}{\gamma_1l^2+\gamma_2},&\qquad \text{for}\qquad j=1,\vspace{0.2cm}\\
\lambda_j^{(2)}= \beta_j,&\qquad \text{for}\qquad j=2,3,
\end{array}
\right. 
\end{equation*}
where $\beta_j$ is the solution  of the equation 
$$A+iB+\beta_j (C+iD)=0,$$
where 
\begin{equation*}
\left\{
\begin{array}{ll}
A=k^2 l^2 \Big(k^2(1+l^2)  - k^4 l^2 + \gamma_1 \gamma_2\Big)\vspace{0.3cm}\\
B=k^3 l^3 \Big(\gamma_1 (k^2-1)+ \gamma_2\Big)\vspace{0.3cm}\\
C=2 k^2 l^2 \Big(\gamma_1 l^2(k^2-1) +\gamma_2(k^2l^2-1) \Big)\vspace{0.3cm}\\
D=2 k^3 l^3 \Big( l^2(k^2-1)  -1  - \gamma_1 \gamma_2\Big). 
\end{array}
\right. 
\end{equation*}
We need to show that $Re(\beta_j)>0.$ In order to prove this,  it is enough to verify  that 
\begin{equation*}
AC+BD<0. 
\end{equation*}
Hence,  
\begin{eqnarray*}
K=AC+BD&=&\Big(k^2(1+l^2)  - k^4 l^2 + \gamma_1 \gamma_2\Big)\Big(\gamma_1 l^2(k^2-1) +\gamma_2(k^2l^2-1) \Big)\\
&&+k^2 l^2 \Big(\gamma_1 (k^2-1)+ \gamma_2\Big)\Big( l^2(k^2-1)  -1  - \gamma_1 \gamma_2\Big). 
\end{eqnarray*}
Factorizing by $k^4l^4\gamma_2$, we deduce 
\begin{equation*}
K=-2 k^4 l^4 \gamma _2 \left(\gamma _1 \gamma _2+\left(k^2-1\right)^2 l^2 \gamma _1^{2  }+k^2 \left(\left(k^2-1\right) l^2-1\right)^2\right)<0,
\end{equation*}
which concludes the proof of \eqref{Estimate_Eigenvalues_0}
\begin{itemize}
\item \textbf{Behavior of $\lambda_j(\zeta)$
 when $|\zeta|\rightarrow \infty$.}
 \end{itemize}

For $|\zeta|\rightarrow \infty$ and  following \cite{IHK08},  we consider the characteristic equation in the form 
\begin{eqnarray}\label{Charact_Equation_mu}
\Psi(\zeta^{-1}) &=&\zeta^6\det {(\mu I+(A+\zeta^{-1} L))}\notag\vspace{0.3cm}\\
&=& \mu^6+(\gamma_1+\gamma_2)\zeta^{-1}\mu^5+\Big\{(k^2+1)(l^2\zeta^{-2}-1) +(\gamma_1\gamma_2+1)\zeta^{-2}-a^2\Big\}\mu^4\notag\\
&&+\Big\{ \gamma_1(k^2+1)(  l^2\zeta^{-2}-1 )+\gamma_2\left((k^2l^2+1)\zeta^{-2}-(1+a^2)\right)\Big\}\zeta^{-1}\mu^3\notag\\
&&+\left[\gamma_1\gamma_2(k^2l^2\zeta^{-2}-1)\zeta^{-2}+(l^2\zeta^{-2}-1) \Big\{k^2(l^2\zeta^{-2}-1)+\Big(k^2\zeta^{-2}-a^2(k^2+1)\Big)\Big\}\right]\mu^2\\
&&+\Big\{\gamma_1 k^2( l^2\zeta^{-2}-1)^2+ k^2 l^2 \gamma_2\zeta^{-4}  - a^2 k^2 l^2 \gamma_2 \zeta^{-2}  
+ a^2 \gamma_2 \Big\}\zeta^{-1}\mu-a^2k^2(l^2\zeta^{-2}-1)^2=0,\notag
\end{eqnarray}
where $\mu(\zeta^{-1})$ is the eigenvalues of 
\eqref{Charact_Equation_mu}.
Moreover, we  have the relation
\begin{equation*}
\lambda_j(\zeta)=\zeta\mu_j(\zeta^{-1}). 
\end{equation*}
Now, for $|\zeta|^{-1}\rightarrow 0$, we have the asymptotic expansion of $\mu_j(\zeta^{-1})$ in the form (for simplicity, we put $\nu=\zeta^{-1}$)
\begin{equation}\label{Asymptotic_Expansion_mu}
\mu_j(\nu)=\mu_j^{(0)}+\mu_j^{(1)}\nu+\mu_j^{(2)}\nu^{2}+...,\qquad \qquad 1\leq j\leq 6.
\end{equation}
 Plugging \eqref{Asymptotic_Expansion_mu} into \eqref{Charact_Equation_mu} and  equating coefficients  of like powers of $\nu$, we get

\begin{equation*}
\left\{
\begin{array}{ll}
\mu_j^{(0)}=\pm 1,&\qquad \text{for}\qquad j=1,2\vspace{0.2cm}\\
\mu_j^{(0)}=\pm a,\qquad \mu_j^{(1)}=-\dfrac{\gamma_1}{2}\qquad\qquad \qquad &\qquad \text{for}\qquad j=3,4,\vspace{0.2cm}\\
\mu_j^{(0)}=\pm k \qquad \mu_j^{(1)}=-\dfrac{\gamma_2}{2}&\qquad \text{for}\qquad j=5,6.
\end{array}
\right. 
\end{equation*}

 \begin{itemize}
\item For $a=1$, 
\begin{equation*}
\mu_j^{(1)}=\delta_j=\frac{1}{4} \left(-\gamma_1\pm \sqrt{\gamma_1^2-4}\right),\qquad \text{for}\qquad j=1,2
\end{equation*}
It is clear that $Re(\delta_j)<0$. 
\item For $a\neq 1$, we have 

\begin{equation*}
\left\{
\begin{array}{ll}
 \mu_j^{(1)}=0,\vspace{0.3cm}\\
 \mu_j^{(2)}=\pm\dfrac{l^2(1-a^2)+1}{2 \left(a^2-1\right)},\vspace{0.3cm}\\
 \mu_j^{(3)}=\dfrac{\left(a^2-1\right)^2 l^2 \gamma_2+\gamma_1}{2 \left(a^2-1\right)^2}=\kappa_j>0,
\end{array}
\qquad \text{for}\qquad j=1,2
\right. 
\end{equation*}
\end{itemize}
which concludes the proof of  Proposition \ref{Propo_1}. 
\end{proof}

\section{The case $\gamma_1=0$ and $\gamma_2>0$}\label{Section_5}
In this section, we investigate the case where $\gamma_1=0$ and $\gamma_2>0$. In this case the only acting  damping term  $\gamma_2w_t$ of the whole system is in the third equation. We prove that the effect of this damping term will be propagated to the other components  of the solution which will lead eventually, to the convergence of the solution to zero.  This requires to assume  more regularity on the initial data than the case where $\gamma_i>0,\, i=1,2$.  Moreover, our result has been proved under a new extra assumption on the coefficients of the system and it reads as  follows. 
\begin{theorem}\label{Theorem_One_Damping}
Assume that $\gamma_1=0$ and
\begin{equation*}
\left(k^2-1\right) l^2-1\neq 0.
\end{equation*}
  Let $s$ be a nonnegative integer and  $U(x,t)$ be the solution of \eqref{First_order_system}. Assume that $U_0\in H^s(\R)\cap L^1(\R)$,  then the following decay estimates hold for $t$ large enough:
\begin{itemize}
\item For $a=k=1$, we have 
\end{itemize}
\begin{equation}
\left\Vert \partial _{x}^{j}U\left( t\right) \right\Vert _{L^{2}}\leq
C\left( 1+t\right) ^{-1/4-j/2 }\left\Vert
U_{0}\right\Vert _{L^{1}}+C(1+t)^{-\ell/2}\left\Vert \partial
_{x}^{j+\ell+1}U_{0}\right\Vert _{L^{2}}, 
 \, 0\leq j\leq s-\ell-1. \label{Decay_U_estimate_L_2_1_gamma_0_1_gamma_2_k}
\end{equation}
\begin{itemize}
\item For $a\neq 1$, we have 
\end{itemize}
\begin{equation}
\left\Vert \partial _{x}^{j}U\left( t\right) \right\Vert _{L^{2}}\leq
C\left( 1+t\right) ^{-1/4-j/2 }\left\Vert
U_{0}\right\Vert _{L^{1}}+C\left( 1+t\right) ^{-\ell/2}\left\Vert \partial
_{x}^{j+\ell+3}U_{0}\right\Vert _{L^{2}},\, 0\leq j\leq s-\ell-3. 
\label{Decay_U_estimate_L_2_1_section_6_gamma_2_k}
\end{equation} 
\end{theorem}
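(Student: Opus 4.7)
The plan is to follow the framework sketched in the introduction: work on the Fourier side, split the frequency axis $\mathbb{R}_\xi$ into low, middle, and high frequency regions $R_1 = \{|\xi|\leq \xi_0\}$, $R_2 = \{\xi_0 \leq |\xi| \leq \xi_1\}$, $R_3 = \{|\xi| \geq \xi_1\}$, derive pointwise Fourier bounds in each region through an eigenvalue expansion of $\Phi(i\xi) = -(L+i\xi A)$, and then pass back to physical space via Plancherel. The essentially new feature compared with Section \ref{Asym_Expan_Section} is that only one dissipation $-\gamma_2|\hat\eta|^2$ is available in the energy identity, so no Lyapunov-functional argument produces an estimate of the type $\eqref{L_6_main}$; instead I would rely on Lemma \ref{Theorem_Exponential_avoid_Jordan} to bound $|e^{\Phi(i\xi)t}|$ directly from the set of roots $\{\lambda_j(i\xi)\}_{j=1}^{6}$ of the characteristic polynomial, without recourse to eigenvectors or Jordan blocks.

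First I would set $\gamma_1 = 0$ in \eqref{Characteristic_Poly_2} and expand each branch $\lambda_j$ analytically near $\zeta = 0$ and near $\zeta = \infty$, exactly as in Proposition \ref{Propo_1}. On $R_1$, the substitution $\lambda_j(\zeta) = \lambda_j^{(0)} + \lambda_j^{(1)}\zeta + \lambda_j^{(2)}\zeta^2 + \dots$ (with $\zeta = i\xi$) together with $\gamma_1 = 0$ should yield one branch with $\operatorname{Re}\lambda_j(i\xi) \sim -c|\xi|^2$, two further branches whose leading $\lambda^{(0)}$ are non-zero imaginary but whose second-order correction has strictly negative real part, and three branches which are zeros of the cubic $X^3 + \gamma_2X^2 + (l^2+1)X + \gamma_2$; a sign-change argument (analogous to the one used for \eqref{Equation_1_Roots_2}) shows that these three roots have negative real parts, all together giving the heat-type factor $e^{-c|\xi|^2 t}$ on $R_1$. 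On the compact region $R_2$, continuity of the roots combined with the fact that no eigenvalue can be purely imaginary for $\xi \neq 0$ yields a uniform exponential decay; the absence of purely imaginary branches has to be verified by substituting $\lambda = i\omega$ into the characteristic polynomial and using $\gamma_2 > 0$ together with the standing assumption $(k^2-1)l^2\neq 1$ to rule out real $\omega$.

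The delicate part is at infinity. Using the substitution $\mu = \lambda/\zeta$, $\nu = \zeta^{-1}$ as in \eqref{Charact_Equation_mu}, I would expand $\mu_j(\nu) = \mu_j^{(0)} + \mu_j^{(1)}\nu + \mu_j^{(2)}\nu^2 + \dots$. The six leading values $\mu_j^{(0)} \in \{\pm 1,\pm a,\pm k\}$ come from the purely hyperbolic part $A$; the damping enters only through $L$. For the two modes with $\mu_j^{(0)} = \pm k$ (the $w$-equation) the first-order correction is $\mu_j^{(1)} = -\gamma_2/2$, producing a constant negative real part. For the remaining four modes the first-order correction vanishes, and one must push the expansion further. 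For $a = k = 1$ I expect two modes to retain a constant negative real part while the other two pick up a correction of order $|\xi|^{-2}$, which together with a polynomial prefactor $1 + |\xi|^2$ in Lemma \ref{Theorem_Exponential_avoid_Jordan} (arising from eigenvalue coalescence as $\xi\to\infty$) yields the $\ell + 1$ regularity loss. For $a\neq 1$ the wave speeds separate and the relevant modes pick up only an $|\xi|^{-2}$ correction, but with a larger polynomial prefactor $(1+|\xi|^2)^2$, explaining the $\ell + 3$ loss. The stability condition $(k^2-1)l^2\neq 1$ is precisely what I expect to make the coefficient in front of $|\xi|^{-2}$ non-zero, in the same spirit as the computation $AC + BD < 0$ in Proposition \ref{Propo_1}.

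Once the eigenvalue bounds are in hand, Lemma \ref{Theorem_Exponential_avoid_Jordan} produces, in each region, a pointwise estimate of the form $|\hat U(\xi,t)|^2 \leq C\,P(\xi)\,e^{-c\eta(\xi)t}|\hat U_0(\xi)|^2$, and the decay estimates \eqref{Decay_U_estimate_L_2_1_gamma_0_1_gamma_2_k} and \eqref{Decay_U_estimate_L_2_1_section_6_gamma_2_k} follow by Plancherel, exactly as in Subsection \ref{Subsection_Proof_1}: on $R_1$ one uses $\|\hat U_0\|_{L^\infty}\leq \|U_0\|_{L^1}$ together with \eqref{Integral_Inequality} to obtain the $(1+t)^{-1/4-j/2}$ factor; on $R_2$ one gets exponential decay; on $R_3$ one optimizes $\sup_{|\xi|\geq \xi_1}|\xi|^{-2m}e^{-ct|\xi|^{-2}}\leq C(1+t)^{-m}$ with $m = \ell + \beta$, where $\beta = 1$ if $a = k = 1$ and $\beta = 3$ if $a \neq 1$, so as to absorb the polynomial prefactor $P(\xi)$. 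The main obstacle is the high-frequency step: identifying the slowest decaying branch, computing enough terms of the asymptotic expansion to see the leading dissipative coefficient, tracking the exact form of $P(\xi)$, and verifying that $(k^2-1)l^2\neq 1$ is the precise algebraic condition that keeps this coefficient from vanishing.
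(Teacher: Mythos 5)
Your overall strategy is exactly the paper's: three-region Fourier splitting, asymptotic expansion of the eigenvalues of $\Phi(i\xi)$ at $\xi\to 0$ and $\xi\to\infty$, Putzer's formula (Lemma \ref{Theorem_Exponential_avoid_Jordan}) to bound $|e^{\Phi(i\xi)t}|$ without eigenvectors, and Plancherel to conclude. However, there are two concrete gaps. First, you locate the role of the hypothesis $(k^2-1)l^2\neq 1$ in the high-frequency expansion, claiming it is what keeps the $|\xi|^{-2}$ dissipative coefficient nonzero there. In the paper it enters at \emph{low} frequency: the two oscillatory branches with $\lambda_j^{(0)}=\pm ikl$ acquire a second-order real part $-\hat\beta\xi^2$ with
\begin{equation*}
\hat{\beta}=\frac{\gamma _2 k^2 \left(\left(k^2-1\right) l^2-1\right)^2}{2 \left(\gamma _2^2 \left(k^2 l^2-1\right)^2+k^2 l^2 \left(\left(k^2-1\right) l^2-1\right)^2\right)},
\end{equation*}
which is strictly positive precisely when $(k^2-1)l^2\neq 1$; if this fails, the heat-kernel bound $e^{-c|\xi|^2t}$ on $R_1$ is lost for those two modes and the $(1+t)^{-1/4-j/2}$ rate cannot be obtained. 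The high-frequency coefficients ($-l^2\gamma_2/2$ for $j=1,2$, etc.) do not involve this condition at all. Without identifying where the condition is actually needed, your low-frequency step is incomplete. Relatedly, your bookkeeping at infinity is off: for $a=k=1$ four modes (not two) keep an $O(1)$ negative real part, and for $a\neq 1$ the slowest branches $j=3,4$ only pick up dissipation at order $|\xi|^{-4}$ (via $\mu_j^{(5)}$), not $|\xi|^{-2}$, with prefactor $|\xi|^6$; this is what produces the $\ell+3$ loss, so the exponent and prefactor you propose would not reproduce \eqref{Decay_U_estimate_L_2_1_section_6_gamma_2_k}.

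Second, you do not address multiple eigenvalues, which is where the ``for $t$ large enough'' in the statement comes from. At low frequency the cubic $Z^3+\gamma_2Z^2+(l^2+1)Z+\gamma_2=0$ can have a double root (the case $l^2=8$, and more generally depending on the sign of the Cardano discriminant), so Putzer's $r_6(t)$ acquires a factor $te^{\sigma_5 t}$ that must be absorbed for large $t$. In the middle region, ``continuity of the roots'' alone does not give the estimate: the paper first proves no eigenvalue is purely imaginary (Lemma \ref{Lemma_Reissig}, which incidentally does not use $(k^2-1)l^2\neq 1$), deduces a uniform negative bound on the real parts by analyticity, and then handles possible coalescence at isolated $\xi_0$ with a prefactor $(1+|\xi|^{2m}t^m)$ together with a Duhamel--Gronwall perturbation argument around $\xi_0$ in the spirit of Reissig--Wang. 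Your proposal would need these two ingredients added to be a complete proof.
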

The proof of Theorem \ref{Theorem_One_Damping} will be given in Subsection \ref{Theorem_One_Damping_1}. However, it seems difficult to build  appropriate Lyapunov functionals in this case, as we did in Section \ref{Seb_Section_2_1}. Instead, we rely on asymptotic expansion of the eigenvalues of the matrix $\Phi(\zeta)$ and on the behavior of the Fourier image of the solution in  low frequencies $\Upsilon_{L}=\left\{|\xi|<\nu\ll 1\right\}$, 
 middle frequencies $\Upsilon_{M}=\left\{\nu\leq|\xi|\leq N\right\}$ and high frequencies $\Upsilon_{H}=\left\{|\xi|>N\gg 1\right\}$ regions.   To estimate the Fourier image of the solution, we compute the exponential of the matrix $\Phi(\zeta) t$ thanks to the following lemma which avoids the use of eigenvectors and eigenspaces. 
\begin{lemma}[\cite{Putzer_1966}]\label{Theorem_Exponential_avoid_Jordan}
Assume that $A$ is an $n\times n$ matrix with the eigenvalues $\lambda_1,\lambda_2,...,\lambda_n$ (real or complex) written in some arbitrary but specified order and they are not necessary distinct. Then 
\begin{equation}\label{Formula_Exponential_No_Jordan}
e^{tA}=\sum_{j=0}^{n-1} r_{j+1}(t)P_j
\end{equation}
where 
\begin{equation*}
P_0=I,\qquad P_j=\prod_{k=1}^j
(A-\lambda_kI),\qquad j=1,...,n
\end{equation*}
and $r_1(t),...,r_n(t)$ are (real or complex) functions  defined by the following equations
\begin{equation}\label{Functions_r_j}
\left\{
\begin{array}{ll}
r^\prime_1(t)=\lambda_1r_1(t),&\qquad r_1(0)=1,\vspace{0.2cm}\\
r_2^\prime(t)=\lambda_2r_2(t)+r_1(t),&\qquad r_2(0)=0,\vspace{0.2cm}\\
\vdots\vspace{0.2cm}\\
r_n^\prime(t)=\lambda_nr_n(t)+r_{n-1}(t),&\qquad r_{n}(0)=0.
\end{array}
\right. 
\end{equation}
\end{lemma}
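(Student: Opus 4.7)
The plan is to verify directly that the right-hand side of \eqref{Formula_Exponential_No_Jordan}, call it $M(t) := \sum_{j=0}^{n-1} r_{j+1}(t) P_j$, satisfies the same matrix initial value problem as $e^{tA}$, namely $M'(t) = A M(t)$ with $M(0) = I$, and then conclude by uniqueness of solutions to linear matrix ODEs. This reduces the lemma to two short computations and one structural fact (Cayley--Hamilton), so no information about eigenvectors or Jordan blocks is ever needed.

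First I would check the initial condition. From \eqref{Functions_r_j} one has $r_1(0)=1$ and $r_j(0)=0$ for $j\geq 2$, and since $P_0=I$, the sum collapses to $M(0)=I$. Next, differentiating termwise and substituting the recursion \eqref{Functions_r_j}, I would obtain
\begin{equation*}
M'(t) \;=\; \lambda_1 r_1(t) P_0 \;+\; \sum_{j=1}^{n-1}\bigl(\lambda_{j+1}r_{j+1}(t)+r_j(t)\bigr)P_j.
\end{equation*}
Separately, using the algebraic identity $A P_j=(A-\lambda_{j+1}I)P_j+\lambda_{j+1}P_j=P_{j+1}+\lambda_{j+1}P_j$, which follows straight from the definition $P_{j+1}=(A-\lambda_{j+1}I)P_j$, I would compute
\begin{equation*}
A M(t) \;=\; \sum_{j=0}^{n-1} r_{j+1}(t)\,A P_j \;=\; \sum_{k=1}^{n} r_k(t) P_k \;+\; \sum_{j=0}^{n-1} \lambda_{j+1}r_{j+1}(t)P_j .
\end{equation*}

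The one subtle point, and the step I expect to be the main obstacle to making the argument clean, is matching these two expressions. The mismatch is exactly the $k=n$ term $r_n(t)P_n$ in $AM(t)$, which has no analogue in $M'(t)$. This is resolved by the Cayley--Hamilton theorem: since $\lambda_1,\dots,\lambda_n$ are the eigenvalues of $A$ (with multiplicity), the characteristic polynomial of $A$ is $\prod_{k=1}^{n}(X-\lambda_k)$, so $P_n=\prod_{k=1}^{n}(A-\lambda_k I)=0$. With this vanishing, the first sum truncates to $\sum_{k=1}^{n-1}r_k(t)P_k$, and relabeling $k\mapsto j$ produces exactly the right-hand side of the expression for $M'(t)$. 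Hence $M'(t)=AM(t)$, and combined with $M(0)=I$, uniqueness of the solution to $X'(t)=AX(t)$, $X(0)=I$, gives $M(t)=e^{tA}$, which is \eqref{Formula_Exponential_No_Jordan}. Note that the ODE system \eqref{Functions_r_j} is triangular and can be solved explicitly by successive integration (so the $r_j(t)$ exist on $[0,\infty)$), which legitimizes the termwise differentiation used above.
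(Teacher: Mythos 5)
Your proof is correct and complete: the verification of the initial condition, the identity $AP_j = P_{j+1} + \lambda_{j+1}P_j$, the use of Cayley--Hamilton to kill the stray term $r_n(t)P_n$, and the appeal to uniqueness for $X'(t)=AX(t)$, $X(0)=I$ are exactly what is needed. The paper itself offers no proof of this lemma --- it is quoted from Putzer's 1966 paper --- and your argument is essentially the standard proof given in that reference, so there is nothing to reconcile.
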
 

Notice that Lemma \ref{Theorem_Exponential_avoid_Jordan} enables us to  compute the exponential of the matrix $\Phi(\zeta)t$ without the use of the canonical Jordan form which requires the knowledge of the eigenvectors  and the corresponding eigenspaces.

\subsection{Asymptotic expansion of the eigenvalues} 
In this subsection, we perform an asymptotic expansion of the eigenvalues of $\Phi(\zeta)$ in the low and high frequencies.   
\begin{proposition}\label{Propo_2}
Let $\gamma_1=0,\gamma_2>0$ and $\lambda_j(\zeta),\, 1\leq j\leq 6$ be the eigenvalues of $\Phi(\zeta)$. Assume that  
\begin{equation}\label{Main_Assump}
(k^2-1)l^2-1\neq 0. 
\end{equation}
Then,  as $|\xi|\rightarrow0$, we have  
\begin{equation}\label{Estimate_Eigenvalues_1}
Re(\lambda_j)(i\xi)=
\left\{
\begin{array}{ll}
-\dfrac{a^2l^2}{\gamma_2}|\xi|^2+O(|\xi|^3),&\qquad \text{for}\qquad j=1,\vspace{0.3cm}\\
-\hat{\beta}|\xi|^2+O(|\xi|^3),&\qquad \text{for}\qquad j=2,3,\vspace{0.3cm}\\
Re(\sigma_j)+O(|\xi|),&\qquad \text{for}\qquad j=4,5,6.
\end{array}
\right. 
\end{equation}

On the other hand,  as $|\xi|\rightarrow+\infty$, we obtain 

\begin{itemize}
\item For $a=k=1$, 
\begin{equation}\label{Estimate_Eigenvalues_infinity_1}
Re(\lambda_j)(i\xi)=
\left\{
\begin{array}{ll}
-\dfrac{l^2 \gamma_2}{2}|\xi|^{-2}+O(|\xi|^{-3}),&\qquad \text{for}\qquad j=1,2.
\vspace{0.2cm}\\
-\dfrac{\gamma_2}{6}+O(|\xi|^{-1}),&\qquad \text{for}\qquad j=3,4,\vspace{0.2cm}\\
-\dfrac{\gamma_2}{2}+O(|\xi|^{-1}),&\qquad \text{for}\qquad j=5,6.
\end{array}
\right. 
\end{equation}
\item For $a\neq 1$, we have 
\begin{equation}\label{Estimate_Eigenvalues_infinity_2}
Re(\lambda_j)(i\xi)=
\left\{
\begin{array}{ll}
-\dfrac{l^2\gamma_2}{2}|\xi|^{-2}+O(|\xi|^{-3}),&\qquad \text{for}\qquad j=1,2\vspace{0.2cm}\\
-\dfrac{l^2 \gamma_2}{2 (a-1)^2 (a+1)^2} |\xi|^{-4}+O(|\xi|^{-5}),&\qquad \text{for}\qquad j=3,4,\vspace{0.2cm}\\
-\dfrac{\gamma_2}{2}+O(|\xi|^{-1}),&\qquad \text{for}\qquad j=5,6.
\end{array}
\right. 
\end{equation}
\end{itemize}
\end{proposition}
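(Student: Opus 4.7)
The plan is to adapt the strategy already used in the proof of Proposition \ref{Propo_1}: substitute $\gamma_1=0$ into the characteristic polynomial \eqref{Characteristic_Poly_2} and perform an analytic perturbation expansion of its roots $\lambda_j(\zeta)$ near $\zeta=0$ and (via the rescaling $\mu = \lambda/\zeta$) near $\zeta=\infty$. Since $\Phi(\zeta)$ depends analytically on $\zeta$, the theorem in \cite{Kat76_2} cited in the previous proof again guarantees that each branch $\lambda_j(\zeta)$ admits a convergent power (or Puiseux) series, whose coefficients we recover concretely by matching like powers of $\zeta$ in $\det(\lambda I-\Phi(\zeta))=0$.

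For the low-frequency regime $|\zeta|\to 0$, setting $\gamma_1=0$ in \eqref{Characteristic_Poly_2} and then $\zeta=0$ yields the factorization $y\,(y^{2}+k^{2}l^{2})\bigl(y^{3}+\gamma_{2}y^{2}+(l^{2}+1)y+\gamma_{2}\bigr)=0$, so $\lambda_1^{(0)}=0$, $\lambda_{2,3}^{(0)}=\pm ikl$, and $\lambda_{4,5,6}^{(0)}=\sigma_j$ are the roots of $X^{3}+\gamma_{2}X^{2}+(l^{2}+1)X+\gamma_{2}=0$. The sign $\mathrm{Re}(\sigma_j)<0$ follows by the same elementary argument used before: one sign change shows $g(X)$ has a real root in $(-\gamma_{2},0)$, and the two remaining conjugate roots have real part $-(r_1+\gamma_{2})/2<0$. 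Inserting the ansatz $\lambda_j(\zeta)=\lambda_j^{(0)}+\lambda_j^{(1)}\zeta+\lambda_j^{(2)}\zeta^{2}+\cdots$ and equating coefficients produces $\lambda_1^{(1)}=0$ and $\lambda_1^{(2)}=a^{2}l^{2}/\gamma_{2}$; the same procedure at the branches $\pm ikl$ extracts the constant $\hat\beta$, whose positivity is verified by the same $AC+BD<0$ computation used in the $\gamma_1>0$ case (with the simplification $\gamma_1=0$).

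For the high-frequency regime, I will pass to $\mu=\lambda/\zeta$, $\nu=\zeta^{-1}$, and expand $\mu_j(\nu)=\mu_j^{(0)}+\mu_j^{(1)}\nu+\mu_j^{(2)}\nu^{2}+\cdots$ in the rescaled polynomial $\Psi(\nu)=\nu^{6}\det(\mu I+A+\nu L)$. The leading roots $\mu_j^{(0)}$ are the eigenvalues of $-A$, namely $\pm 1,\pm a,\pm k$. Because $\lambda_j=\zeta\mu_j$ and $\zeta=i\xi$, the real part of $\lambda_j$ comes from the first coefficient of the $\mu_j$-expansion with a nonzero imaginary part. For the $\pm k$ branch a direct computation gives $\mu_{5,6}^{(1)}=-\gamma_{2}/2$, producing the claimed $-\gamma_{2}/2$ limit; for the $\pm 1$ branches the first imaginary correction arises only at order $\nu^{2}$, giving the $|\xi|^{-2}$ rate. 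The most delicate branch is the pair near $\pm a$ in the case $a\neq 1$: the $\nu^{1}$ and $\nu^{2}$ corrections turn out to be real, so the real part of $\lambda_{3,4}$ is pushed to order $\nu^{3}$ after multiplication by $\zeta$, producing the $|\xi|^{-4}$ rate. This is precisely where the assumption $(k^{2}-1)l^{2}\neq 1$ intervenes: it guarantees the denominator factor produced when solving the recurrence for $\mu_{3,4}^{(2)}$ and $\mu_{3,4}^{(3)}$ is nonzero, so the $\nu^{3}$ correction is well-defined and its imaginary part equals $-l^{2}\gamma_{2}/\bigl(2(a-1)^{2}(a+1)^{2}\bigr)\neq 0$. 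For $a=k=1$ all six $\mu_j^{(0)}$ collapse onto $\pm 1$, and the same machinery yields $\mu^{(1)}=-\gamma_{2}/6$ for the four branches that inherit from the sixfold collision.

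The main obstacle I anticipate is algebraic bookkeeping rather than conceptual novelty: the characteristic polynomial has degree six with four parameters $(a,k,l,\gamma_{2})$, and several branches coincide at leading order in the high-frequency regime (fourfold at $\pm 1$ when $a=k=1$), so one must carry the perturbation expansion to sufficiently high order to capture the first imaginary contribution to each $\mu_j$, and one must keep track of which branch is which across the collisions. Verifying strict positivity of the coefficient $l^{2}\gamma_{2}/\bigl(2(a-1)^{2}(a+1)^{2}\bigr)$ and pinpointing the exact place where $(k^{2}-1)l^{2}\neq 1$ is used to avoid a vanishing denominator in the recursion for $\mu_{3,4}^{(2)}$ and $\mu_{3,4}^{(3)}$ is the technical crux of the argument.
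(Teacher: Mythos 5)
Your overall strategy coincides with the paper's: set $\gamma_1=0$ in the characteristic polynomial, expand $\lambda_j(\zeta)=\sum_m\lambda_j^{(m)}\zeta^m$ near $\zeta=0$ and $\mu_j(\nu)=\sum_m\mu_j^{(m)}\nu^m$ (with $\lambda_j=\zeta\mu_j(\zeta^{-1})$) near infinity, and match coefficients; the identification of the unperturbed roots $0$, $\pm ikl$, and the cubic $Z^3+\gamma_2Z^2+(l^2+1)Z+\gamma_2=0$ with $Re(\sigma_j)<0$ is exactly what the paper does. However, you have misplaced the one genuinely structural ingredient: the hypothesis $(k^2-1)l^2-1\neq 0$ does \emph{not} enter the high-frequency recursion for $\mu_{3,4}^{(2)}$, $\mu_{3,4}^{(3)}$. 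The denominators appearing in those coefficients are powers of $a^2-1$ (e.g.\ $\mu_{3,4}^{(2)}=\pm a/(2(1-a^2))$ and $\mu_{3,4}^{(5)}=-l^2\gamma_2/(2(a-1)^2(a+1)^2)$), which are already nonzero by the standing case assumption $a\neq 1$; nothing there involves $(k^2-1)l^2-1$. The hypothesis is used in the \emph{low-frequency} regime, for the branches $j=2,3$: one finds
\begin{equation*}
\hat{\beta}=\frac{\gamma _2 k^2 \left(\left(k^2-1\right) l^2-1\right)^2}{2 \left(\gamma _2^2 \left(k^2 l^2-1\right)^2+k^2 l^2 \left(\left(k^2-1\right) l^2-1\right)^2\right)},
\end{equation*}
so $\hat\beta>0$ precisely when $(k^2-1)l^2\neq 1$, and $\hat\beta=0$ otherwise, which would destroy the $-\hat\beta|\xi|^2$ decay that the rest of Section 5 relies on. Ironically, the computation you yourself propose for $\hat\beta$ (the $AC+BD<0$ argument with $\gamma_1=0$) gives $K=-2k^6l^4\gamma_2((k^2-1)l^2-1)^2$, which already reveals that this is exactly where the hypothesis is needed; your later claim that it instead controls a denominator in the $\mu_{3,4}$ expansion contradicts this and would send you looking for the ``technical crux'' in the wrong place.

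There are also some parity slips in your bookkeeping of which expansion coefficient produces the real part of $\lambda_j$: since $\lambda_j=\sum_m\mu_j^{(m)}(i\xi)^{1-m}$, a \emph{real} coefficient $\mu_j^{(m)}$ contributes to $Re(\lambda_j)$ only for $m$ odd, so the $|\xi|^{-2}$ rate for $j=1,2$ comes from $\mu^{(3)}$ (not from an imaginary part at order $\nu^2$), and for $a\neq1$, $j=3,4$ the $|\xi|^{-4}$ rate comes from the real coefficient $\mu^{(5)}$ (with $\mu^{(1)}=\mu^{(3)}=0$), not from an imaginary part of $\mu^{(3)}$. These would be corrected by carrying out the computation, but as written the plan would have you truncate the expansion too early for the $j=3,4$ branches.
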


\begin{proof}
\begin{itemize}
\item \textbf{Behavior of $\lambda_j(\zeta)$ when $|\zeta|\rightarrow 0$}
\end{itemize}
For $\gamma_1=0$, the characteristic polynomial of \eqref{Characteristic_Poly_2} takes the form 
 \begin{eqnarray}\label{Characteristic_Poly_3}
\det{\left(\lambda I-\Phi(\zeta)\right)}&=&\lambda^6+\gamma_2\lambda^5+\Big\{(k^2+1)(l^2-\zeta ^2) +1-a^2\zeta^2\Big\}\lambda^4\notag\\
&&+\gamma_2\Big\{ (k^2l^2+1)-(1+a^2)\zeta^2\Big\}\lambda^3\notag\\
&&+\left[(l^2-\zeta^2) \Big\{k^2(l^2-\zeta^2)+\Big(k^2-a^2(k^2+1)\zeta^2\Big)\Big\}\right]\lambda^2\\
&&+\gamma_2\Big\{ k^2 l^2  - a^2 k^2 l^2  \zeta^2  
+ a^2\zeta^4\Big\}\lambda-a^2k^2\zeta^2(l^2-\zeta^2)^2.\notag
\end{eqnarray}
For $|\zeta|\rightarrow 0$,  $\lambda_j(\zeta)$ has the  asymptotic expansion as in \eqref{Asymptotic_Expansion_2}.
Consequently,  we have from the above equation that 
\begin{equation*}
\left\{
\begin{array}{ll}
\lambda_j^{(0)}=0,\qquad \lambda_j^{(1)}=0,\qquad \lambda_j^{(2)}=\dfrac{a^2 l^2}{\gamma_2},&\qquad \text{for}\qquad j=1,\vspace{0.2cm}\\
\lambda_j^{(0)}=\pm ikl,\qquad \lambda_j^{(1)}=0,\qquad \lambda_j^{(2)}=\hat{\beta}\pm i\hat{\delta}&\qquad \text{for}\qquad j=2,3,\vspace{0.2cm}\\
\lambda_j^{(0)}=\sigma_j,&\qquad \text{for}\qquad j=4,5,6,
\end{array}
\right. 
\end{equation*}
with 
\begin{equation*}
\hat{\beta}=\frac{\gamma _2 k^2 \left(\left(k^2-1\right) l^2-1\right)^2}{2 \left(\gamma _2^2 \left(k^2 l^2-1\right)^2+k^2 l^2 \left(\left(k^2-1\right) l^2-1\right)^2\right)}. 
\end{equation*}
and 
\begin{equation*}
\hat{\delta}=-\frac{k l \left(\gamma _2^2 \left(k^2 l^2-1\right)+\left(k-k \left(k^2-1\right) l^2\right)^2\right)}{2 \left(\gamma _2^2 \left(k^2 l^2-1\right)^2+k^2 l^2 \left(\left(k^2-1\right) l^2-1\right)^2\right)}. 
\end{equation*}
It is clear that under the assumption \eqref{Main_Assump}, $\hat{\beta}>0$.

On the other hand
 $\sigma_j,\, 4\leq j\leq 6$ are the solutions $Z$ of the cubic equation 
 \begin{equation}\label{Cubic_Equation}
Z^3+\gamma_2Z^2+(l^2+1)Z+\gamma_2=0.
\end{equation}

We can easily show as before, that  $Re(\sigma_j)<0.$
Now, we want to see the multiplicity of the roots of \eqref{Cubic_Equation}. As we have seen before, equation \eqref{Cubic_Equation} has at least one negative real root. In order to know the nature of the other two roots of the equation (in general)  
\begin{equation*}
ax^3+bx^2+cx+d=0
\end{equation*}
we use the Cardano method and investigate the sign of  
\begin{equation*}
D=Q^3+R^2,
\end{equation*}
with 
\begin{equation*}
Q=\frac{3ac-b^2}{9a^2}\qquad \text{and}\qquad R=\frac{9abc-2b^3-27a^2d}{54a^3}.
\end{equation*}
Now, for  equation \eqref{Cubic_Equation}, we have 
\begin{equation*}
Q=\frac{3(l^2+1)-\gamma_2^2}{9},\qquad R=\frac{9\gamma_2(l^2+1)-2\gamma_2^3-27\gamma_2}{54}.
\end{equation*}
We define 
\begin{equation*}
D=Q^3+R^2. 
\end{equation*}
Thus, we have the following cases 
\begin{itemize}
\item If $D>0$, then \eqref{Cubic_Equation} has one real and two complex conjugate roots.
\item If $D<0$, there are three  distinct real roots.
\item  If $D=0$, there is one real root and another real root of double multiplicity. 
\end{itemize}
To do so, we compute:  
\begin{equation*}
D=\frac{1}{108} \left(4 \gamma _2^4- \left(l^4+20 l^2-8\right)\gamma _2^2+4 \left(l^2+1\right)^3\right). 
\end{equation*}
In order to determine the sign of $D$, we consider the quadratic polynomial 
\begin{equation*}
\Lambda =4\omega^2-\left(l^4+20 l^2-8\right)\omega+4 \left(l^2+1\right)^3,\qquad \omega =\gamma _2^2. 
\end{equation*}

Now, it is clear that if $l^2<8$, then the discriminant of $\Lambda$ is:
\begin{equation*}
\Delta =\left(l^4+20 l^2-8\right)^2-64\left(l^2+1\right)^3 =l^2 \left(l^2-8\right)^3<0. 
\end{equation*}
Therefore, $\Lambda>0$, hence $D>0$. 
Consequently,  \eqref{Cubic_Equation} has one real root and two complex conjugate roots. In this case 
\begin{equation}\label{Eigenvalues_0}
\lambda_4=\sigma_4,\quad \lambda_{5}=\sigma_5+i\hat{\sigma}_5,\quad  \lambda_{6}=\sigma_5-i\hat{\sigma}_5, 
\end{equation}
with $\sigma_i<0,\, i=4,5$. 

If $l^2> 8$, then we consider  the equation 
\begin{equation*}
4 \gamma _2^4+4 \left(l^2+1\right)^3-\gamma _2^2 \left(l^4+20 l^2-8\right)=0,
\end{equation*}
written as 
\begin{equation*}
\gamma _2^2=\frac{1}{8} \left(-8 + 20 l^2 + l^4 \pm l \sqrt{(-8 + l^2)^3}\right)>0. 
\end{equation*}
We put 
\begin{equation}\label{gamma_1_gamma_2}
\hat{\gamma}_1=\frac{1}{8} \left(-8 + 20 l^2 + l^4 + l \sqrt{(-8 + l^2)^3}\right),\quad \hat{\gamma}_2=\frac{1}{8} \left(-8 + 20 l^2 + l^4 - l \sqrt{(-8 + l^2)^3}\right).
\end{equation}
Then, for $\gamma_2^2\in (0,\hat{\gamma_1})\cup (\hat{\gamma_2},\infty)$, then $D>0$ and then we have the same situation as in \eqref{Eigenvalues_0}.  If $\gamma_2^2\in(\hat{\gamma}_1,\hat{\gamma}_2),$ then \eqref{Cubic_Equation} has three real roots $\sigma_4\neq \sigma_5\neq\sigma_6 .$

If $l^2=8$, then $D=0$, and in this case  \eqref{Cubic_Equation} has three real roots $\sigma_4\neq \sigma_5=\sigma_6 .$

Consequently, for $|\xi|\rightarrow 0$, we have 
\begin{itemize}
\item for $l^2<8$ or ($l^2>8$ and $\gamma_2^2\in (0,\hat{\gamma_1})\cup (\hat{\gamma_2},\infty)$)
\begin{equation}\label{Estimate_Eigenvalues_0}
\lambda_j(i\xi)=
\left\{
\begin{array}{ll}
-\dfrac{a^2l^2}{\gamma_2}\xi^2+O(|\xi|^3),&\qquad \text{for}\qquad j=1,\vspace{0.3cm}\\
-\hat{\beta}\xi^2\pm i(lk- \hat{\delta}\xi^2)+O(|\xi|^3),&\qquad \text{for}\qquad j=2,3,\vspace{0.3cm}\\
\sigma_4+O(|\xi|),&\qquad \text{for}\qquad j=4,\vspace{0.3cm}\\
\sigma_5\pm i\hat{\sigma}_5+O(|\xi|),&\qquad \text{for}\qquad j=5,6.
\end{array}
\right. 
\end{equation}

\item For $l^2>8$ and  $\gamma_2^2\in(\hat{\gamma}_1,\hat{\gamma}_2).$  
In this case, we have  the following expansion for $|\xi|\rightarrow 0$, 
\begin{equation}\label{Estimate_0_1}
\lambda_j(i\xi)=
\left\{
\begin{array}{ll}
-\sigma_0\xi^2+O(|\xi|^3),&\qquad \text{for}\qquad j=1,\vspace{0.3cm}\\
-\hat{\beta}\xi^2\pm i(lk- \hat{\delta}\xi^2)+O(|\xi|^3),&\qquad \text{for}\qquad j=2,3,\vspace{0.3cm}\\
\sigma_i+O(|\xi|),&\qquad \text{for}\qquad j=4,5,6.
\end{array}
\right. 
\end{equation}
\item For $l^2=8$, we have  
\begin{equation*}
\lambda_5(i\xi)=\lambda_6(i\xi)=\sigma_5 
\end{equation*}
and $\lambda_j(i\xi),\, j=1,2,3,4$ are the same as before. 
\item\textbf{Behavior of $\lambda_j(\zeta)$ when $|\zeta|\rightarrow \infty$.}
\end{itemize}

Set $\lambda_j(\zeta)=\zeta\mu_j(\zeta^{-1})$.
Hence, for $|\zeta|\rightarrow \infty$ then \eqref{Charact_Equation_mu} takes the form 
\begin{eqnarray}\label{Charact_Equation_mu_2}
\Psi(\zeta^{-1}) &=&\zeta^6\det {(\mu I+(A+\zeta^{-1} L))}\notag\vspace{0.3cm}\\
&=& \mu^6+\gamma_2\zeta^{-1}\mu^5+\Big\{(k^2+1)(l^2\zeta^{-2}-1) +\zeta^{-2}-a^2\Big\}\mu^4\notag\\
&&+\gamma_2\Big\{(k^2l^2+1)\zeta^{-2}-(1+a^2)\Big\}\zeta^{-1}\mu^3\notag\\
&&+\left[(l^2\zeta^{-2}-1) \Big\{k^2(l^2\zeta^{-2}-1)+\Big(k^2\zeta^{-2}-a^2(k^2+1)\Big)\Big\}\right]\mu^2\\
&&+\gamma_2\Big\{ k^2 l^2 \zeta^{-4}  - a^2 k^2 l^2 \zeta^{-2}  
+ a^2  \Big\}\zeta^{-1}\mu-a^2k^2(l^2\zeta^{-2}-1)^2=0.\notag
\end{eqnarray}
We assume that $\mu_j(\zeta^{-1})$, have the asymptotic expansion \eqref{Asymptotic_Expansion_mu}, then we get by a direct computation, using \eqref{Charact_Equation_mu_2}:

For $a=k=1$
\begin{equation*}
\left\{
\begin{array}{ll}
\mu_j^{(0)}=\pm 1,\qquad \mu_j^{(1)}=0,\qquad \mu_j^{(2)}=\mp \dfrac{l^2}{2},\qquad \mu_j^{(3)}=\dfrac{l^2 \gamma_2}{4},
&\qquad  \text{for}\qquad j=1,2\vspace{0.2cm}\\
\mu_j^{(0)}=\pm a,\qquad \mu_j^{(1)}=-\dfrac{\gamma_2}{6}\qquad\qquad \qquad &\qquad \text{for}\qquad j=3,4,\vspace{0.2cm}\\
\mu_j^{(0)}=\pm k \qquad \mu_j^{(1)}=-\dfrac{\gamma_2}{2}&\qquad \text{for}\qquad j=5,6.
\end{array}
\right. 
\end{equation*}

 For $a\neq 1$, we have 

\begin{equation*}
\left\{
\begin{array}{ll}
 \mu_j^{(2)}=\pm\dfrac{l^2(1-a^2)+1}{2 \left(a^2-1\right)},\qquad \mu_j^{(3)}=\dfrac{l^2 \gamma_2}{2},&\qquad  \text{for}\qquad j=1,2\vspace{0.3cm}\\
 \mu_j^{(1)}=0,\qquad \mu_j^{(2)}=\pm\dfrac{a}{2 \left(1-a^2\right)},\qquad \mu_j^{(3)}=0,&\qquad  \text{for}\qquad j=3,4,\vspace{0.3cm}\\
 \mu_j^{(4)}=\pm\dfrac{a \left(a^2 \left(4 l^2-1\right)-4 l^2-3\right)}{8 \left(a^2-1\right)^3},\qquad \mu_j^{(5)}=-\dfrac{l^2 \gamma_2}{2 (a-1)^2 (a+1)^2}&\qquad  \text{for}\qquad j=3,4,\vspace{0.3cm}\\
 \mu_j^{(0)}=\pm k \qquad \mu_j^{(1)}=-\dfrac{\gamma_2}{2}&\qquad  \text{for}\qquad j=5,6.
\end{array}
\right. 
\end{equation*}
Consequently, we deduce \eqref{Estimate_Eigenvalues_infinity_1} and \eqref{Estimate_Eigenvalues_infinity_2}.
\end{proof}

\subsection{The estimates in the low frequency region $\Upsilon_L$}
For $\xi\in \Upsilon_L$, we have the following estimates.  
\begin{proposition}\label{Proposition_Low}
There exists two positive constants $\hat{c}_1$ and $\hat{c}_2$ such that the solution $\hat{U}(\xi,t)$ of \eqref{Fourier_system} satisfies for $t$ large enough in $\Upsilon_{L}$ the following estimate:
\begin{equation}\label{Low_F_Estimate}
\vert \hat{U}( \xi ,t)\vert \leq \hat{c}_1 e^{-\hat{c}_2 |\xi|^2 t}\vert \hat{U}( \xi ,0)\vert,\qquad \forall t\geq 0.
\end{equation}
\end{proposition}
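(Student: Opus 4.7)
The plan is to combine the Putzer representation of the matrix exponential in Lemma \ref{Theorem_Exponential_avoid_Jordan} with the low-frequency eigenvalue expansion \eqref{Estimate_Eigenvalues_1} of Proposition \ref{Propo_2}. Since $\hat U(\xi,t) = e^{t\Phi(\zeta)}\hat U_0(\xi)$, it is enough to prove an operator-norm bound $\|e^{t\Phi(\zeta)}\|\le \hat c_1 e^{-\hat c_2 |\xi|^2 t}$ on $\Upsilon_L$. First, I would shrink $\nu$ so that for every $\xi\in\Upsilon_L$ the six eigenvalues $\lambda_1(\zeta),\ldots,\lambda_6(\zeta)$ are pairwise distinct and uniformly separated (at $\xi=0$ they are $0$, $\pm ikl$, $\sigma_4,\sigma_5,\sigma_6$, and assumption \eqref{Main_Assump} together with the case analysis carried out for the cubic \eqref{Cubic_Equation} guarantees separation in the generic regime; on exceptional parameter sets, e.g.\ $l^2=8$, one argues via Jordan blocks, which only introduce polynomial $t$-factors that are harmless against the exponential decay).

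Next, I would solve the triangular ODE system \eqref{Functions_r_j} explicitly. By induction and variation of constants,
\begin{equation*}
r_j(t)=\sum_{k=1}^{j}\frac{e^{\lambda_k(\zeta)t}}{\displaystyle\prod_{\substack{m=1\\m\neq k}}^{j}(\lambda_k(\zeta)-\lambda_m(\zeta))},\qquad 1\le j\le 6.
\end{equation*}
By the preceding step the denominators are bounded below uniformly on $\Upsilon_L$, so the coefficients are uniformly bounded. Applying \eqref{Estimate_Eigenvalues_1}, we obtain $\mathrm{Re}\,\lambda_j(i\xi)\le -c|\xi|^2$ for $j=1,2,3$ and $\mathrm{Re}\,\lambda_j(i\xi)\le -\hat c<0$ for $j=4,5,6$, hence (since $\hat c \ge c|\xi|^2$ on $\Upsilon_L$ for $\nu$ small) $|e^{\lambda_k(\zeta)t}|\le e^{-c|\xi|^2 t}$ for every $k$. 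Summing the six exponentials gives $|r_j(t)|\le C e^{-c|\xi|^2 t}$ uniformly in $\xi\in\Upsilon_L$ and $t\ge 0$. The matrix factors $P_j=\prod_{k=1}^{j}(\Phi(\zeta)-\lambda_k(\zeta)I)$ are polynomials of bounded degree in the bounded entries of $\Phi(\zeta)$ and $\lambda_k(\zeta)$, so $\|P_j\|\le C$ uniformly on $\Upsilon_L$. Putzer's formula then yields
\begin{equation*}
\|e^{t\Phi(\zeta)}\|\le \sum_{j=0}^{5}|r_{j+1}(t)|\,\|P_j\|\le \hat c_1 e^{-\hat c_2 |\xi|^2 t},
\end{equation*}
from which \eqref{Low_F_Estimate} follows.

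The main obstacle I anticipate is the book-keeping required to verify that the eigenvalue separation lower bound is genuinely uniform in $\xi\in\Upsilon_L$ for the non-generic parameter configurations uncovered in Proposition \ref{Propo_2}; specifically, the case $l^2=8$ produces a double root of the cubic \eqref{Cubic_Equation}, which forces a Jordan block at $\xi=0$. In that case one should replace the simple-root closed form for $r_j$ by the variation-of-constants integral, obtaining an extra factor that grows at most like $t^{p}$ with $p\le 2$; this is absorbed into the exponential $e^{-\hat c_2|\xi|^2 t}$ once $t$ is large enough, which is exactly the regime stated in the proposition, so the bound survives intact.
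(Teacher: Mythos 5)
Your proposal is correct and follows essentially the same route as the paper's own proof: Putzer's formula, the explicit partial-fraction form of the $r_j$, uniform lower bounds on the eigenvalue gaps from the low-frequency expansion, the bound $|P_j|\le C$ for $|\xi|$ small, and absorption of the polynomial factor $t e^{\sigma_5 t}$ arising from the double root when $l^2=8$ into the exponential for $t$ large. No substantive differences to report.
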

\begin{proof}
We discuss the following cases 
\begin{description}
\item[ Case 1] $l^2<8$ or  ($l^2>8$ and $\gamma_2^2\in (0,\hat{\gamma_1})\cup (\hat{\gamma_2},\infty)$) where $\hat{\gamma_1}$ and $\hat{\gamma_2}$ are defined in \eqref{gamma_1_gamma_2}. 
\end{description}

In this case, we have  (by neglecting  the small terms) that in $\Upsilon_L,$ the eigenvalues are:
\begin{equation}\label{Estimate_Eigenvalues_0_1}
\lambda_j(i\xi)=
\left\{
\begin{array}{ll}
-\sigma_0\xi^2+O(|\xi|^3),&\qquad \text{for}\qquad j=1,\vspace{0.3cm}\\
-\hat{\beta}\xi^2\pm i(lk- \hat{\delta}\xi^2)+O(|\xi|^3),&\qquad \text{for}\qquad j=2,3,\vspace{0.3cm}\\
\sigma_4+O(|\xi|),&\qquad \text{for}\qquad j=4,\vspace{0.3cm}\\
\sigma_5\pm i\hat{\sigma}_5+O(|\xi|),&\qquad \text{for}\qquad j=5,6.
\end{array}
\right. 
\end{equation}
where $\sigma_0=\frac{a^2l^2}{\gamma_2}$. 
Using the above eigenvalues, we may find the functions $r_j(t),\, 1\leq j\leq 6$ as solutions of \eqref{Functions_r_j}. Indeed, we have 
\begin{equation*}
\left\{
\begin{array}{ll}
r_1(t)=e^{\lambda_1(i\xi )t},&\vspace{0.2cm}\\
r_j(t)=\dint_0^te^{\lambda_j(t-s)}r_{j-1}(s)ds&\qquad \text{for}\qquad 2\leq j\leq 6.
\end{array}
\right. 
\end{equation*}
Consequently, since all the eigenvalues are of multiplicity one we deduce for $\xi\neq 0$
\begin{eqnarray}\label{r_j_formula}
r_j(t)=\sum_{i=1}^{j}\Big(\prod_{k=1,(k\neq i)}^j \frac{e^{\lambda_it}}{(\lambda_i-\lambda_k)}\Big),\qquad 1\leq j\leq 6.
\end{eqnarray}
For instance, we have 
\begin{equation*}
\left\{
\begin{array}{ll}
r_1(t)&=e^{\lambda_1t}\vspace{0.2cm}\\
r_2(t)&=\dfrac{e^{\lambda_2 t}-e^{\lambda_1t}}{\lambda_2-\lambda_1}
\vspace{0.3cm}\\
r_3(t)&=\dfrac{e^{\lambda _1 t}}{\left(\lambda _1-\lambda _2\right) \left(\lambda _1-\lambda _3\right)}+\dfrac{e^{\lambda _2 t}}{\left(\lambda _2-\lambda _1\right) \left(\lambda _2-\lambda _3\right)}+\dfrac{e^{\lambda_3 t}}{\left(\lambda _1-\lambda _3\right) \left(\lambda _2-\lambda _3\right)}, 
\end{array}
\right. 
\end{equation*} 

Hence, using \eqref{Formula_Exponential_No_Jordan}, we deduce that 
\begin{eqnarray}\label{Formula_Exponential}
\hat{U}(\xi,t)&=&e^{\Phi(i\xi)t}\hat{U}_0(\xi)=\sum_{j=0}^{5}r_{j+1}(t)P_j\hat{U}_0(\xi).
\end{eqnarray}
By doing some tedious computations, we may show that 
 for $|\xi|\rightarrow 0$
 \begin{equation}\label{Estimate_Product}
\left|\prod_{k=1,(k\neq i)}^j \frac{1}{(\lambda_i-\lambda_k)}\right|\leq \Lambda,
\end{equation}
where $\Lambda$ is a positive constant and $1\leq j\leq 6$. For example, we have 
\begin{eqnarray*}
&&\left|\frac{1}{\lambda _2-\lambda _1}\right|\leq \kappa_1\\
&&\left|\frac{1}{\left(\lambda _1-\lambda _2\right) \left(\lambda _1-\lambda _3\right)}\right| \leq \kappa_2,\\
&&\left|\frac{1}{\left(\lambda _2-\lambda _1\right) \left(\lambda _2-\lambda _3\right)}\right| \leq \kappa_3
\end{eqnarray*}
where $\kappa_i,\ 1\leq i\leq 3$ are positive constants. 
From \eqref{r_j_formula} and \eqref{Estimate_Product}, we deduce 
\begin{equation}\label{Estimates_r_j}
 |r_j(t)| \leq c_1e^{-c|\xi|^2t},\quad j\in\{1,2,3,4,5,6\}.
\end{equation}
On the other hand, since $|\xi|$ is close to zero we have
\begin{equation}\label{P_j_Estimate}
|P_j|\leq C, \qquad j\in\{1,2,3,4,5,6\}. 
\end{equation}
Consequently, we obtain  from \eqref{Formula_Exponential} that for $|\xi|\rightarrow 0$
\begin{equation*}
|e^{\Phi(i\xi)t}|\leq Ce^{-|\xi|^2 t},
\end{equation*}
which leads to \eqref{Low_F_Estimate}.

\begin{description}
\item[ Case 2]  $l^2>8$ and $\gamma_2^2\in(\hat{\gamma}_1,\hat{\gamma}_2).$  
\end{description}
In this case, the eigenvalues of $\Phi(\xi)$ has the following expansion for $|\xi|\rightarrow 0$, 
\begin{equation}\label{Estimate_Eigenvalues_0_2}
\lambda_j(i\xi)=
\left\{
\begin{array}{ll}
-\sigma_0\xi^2+O(|\xi|^3),&\qquad \text{for}\qquad j=1,\vspace{0.3cm}\\
-\hat{\beta}\xi^2\pm i(lk- \hat{\delta}\xi^2)+O(|\xi|^3),&\qquad \text{for}\qquad j=2,3,\vspace{0.3cm}\\
\sigma_i+O(|\xi|),\quad Re(\sigma_i)<0,&\qquad \text{for}\qquad j=4,5,6.
\end{array}
\right. 
\end{equation}
One can easily show, as previously,  that 
\begin{equation*}
|e^{\Phi(i\xi)t}|\leq Ce^{-|\xi|^2 t}.
\end{equation*}

\begin{description}
\item[Case 3]  $l^2=8$.
\end{description}
In this case, the main difference with the other cases is the presence of an eigenvalue with multiplicity 2, namely:
\begin{equation*}
\lambda_5(i\xi)=\lambda_6(i\xi)=\sigma_5,\quad  \mbox{ for } \xi\in\Upsilon_L.
\end{equation*}

Consequently, $r_6(t)$ in the case $l^2=8$ becomes
\begin{eqnarray}\label{r_6_formula}
r_6(t)&=&\sum_{i=1}^{4}\Big(\prod_{k=1,(k\neq i)}^4 \frac{e^{\lambda_it}}{(\lambda_i-\lambda_k)(\lambda_i-\lambda_5)^2}\Big)\notag\\
&&-e^{\lambda_5 t}\sum_{i=1}^{4}\Big(\prod_{k=1,(k\neq i)}^4 \frac{1}{(\lambda_i-\lambda_k)(\lambda_i-\lambda_5)^2}\Big)\notag\\
&&+te^{{\lambda_5 t}}\Big(\prod_{k=1}^4  \frac{1}{(\lambda_i-\lambda_5)}\Big).
\end{eqnarray}
We can prove as before that 
\begin{equation*}
 |r_6(t)| \leq Ce^{-c|\xi|^2t}+Cte^{\sigma_5t}.
\end{equation*}
Hence, for $t$ large enough there exist $C>0$ and $c_0>0$ such that
$$te^{\sigma_5t}\leq Ce^{-c_0t}.$$
Consequently, we have from \eqref{Formula_Exponential} that for $|\xi|\rightarrow 0$ and $t$ large enough
\begin{equation*}
|e^{\Phi(i\xi)t}|\leq Ce^{-\tilde{c}|\xi|^2 t},
\end{equation*}
for some $\tilde{c}>0$.  This leads to \eqref{Low_F_Estimate} and ends the proof of Proposition \ref{Proposition_Low}. 
\end{proof}
\subsection{The estimates in the high  frequency region $\Upsilon_H$}
For the high frequency region, we have the following estimates.
\begin{proposition}\label{Proposition_High}
Assume that $\left(k^2-1\right) l^2-1\neq 0$. Then, 
there exists two positive constants $\hat{c}_3$ and $\hat{c}_4$ such that the solution $\hat{U}(\xi,t)$ of \eqref{Fourier_system} satisfies in $\Upsilon_{H}$ the estimates:
\begin{itemize}
\item If $a=k=1$, then
\begin{equation}\label{High_F_Estimate_1}
\vert \hat{U}( \xi ,t)\vert \leq \hat{c}_3 |\xi|^2e^{-\hat{c}_4 |\xi|^{-2} t}\vert \hat{U}( \xi ,0)\vert,\qquad \forall t\geq 0.
\end{equation}
\item If $a\neq 1$, then we have 
\begin{equation}\label{High_F_Estimate_2}
\vert \hat{U}( \xi ,t)\vert \leq \hat{c}_3 |\xi|^6e^{-\hat{c}_4 |\xi|^{-2} t}\vert \hat{U}( \xi ,0)\vert,\qquad \forall t\geq 0.
\end{equation}
\end{itemize}

\end{proposition}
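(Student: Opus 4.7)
The strategy mirrors the low-frequency analysis of Proposition~\ref{Proposition_Low}: apply Putzer's representation (Lemma~\ref{Theorem_Exponential_avoid_Jordan}) and control each ingredient in the resulting sum using the high-frequency eigenvalue expansions of Proposition~\ref{Propo_2}. Since those expansions show that the six eigenvalues $\lambda_j(i\xi)$ of $\Phi(i\xi)$ are simple for $|\xi|\geq N$ large enough, one may write
$$\hat{U}(\xi,t)=e^{\Phi(i\xi)t}\hat{U}_0(\xi)=\sum_{j=0}^{5}r_{j+1}(t)\,P_j\,\hat{U}_0(\xi),\qquad r_j(t)=\sum_{i=1}^{j}\Bigl(\prod_{\substack{k=1\\k\neq i}}^{j}\frac{1}{\lambda_i-\lambda_k}\Bigr)e^{\lambda_i t},$$
with $P_j=\prod_{k=1}^{j}(\Phi(i\xi)-\lambda_k(i\xi)I)$. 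Thus there are three quantities to bound: the exponentials $e^{\lambda_i t}$, the products of inverse differences in the formula for $r_j$, and the operator norm of $P_j$.

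For the exponentials, Proposition~\ref{Propo_2} furnishes uniform bounds $e^{Re(\lambda_i)t}\leq e^{-c|\xi|^{-2}t}$ when $a=k=1$ and the corresponding (slower) bound from the $\lambda_{3,4}$ branch when $a\neq 1$, taking the largest real part among the six. For $\|P_j\|$, each factor $\Phi(i\xi)-\lambda_k I$ has entries of size $O(|\xi|)$ (since $\Phi(i\xi)=-L-i\xi A$ and $|\lambda_k(i\xi)|\lesssim|\xi|$), which yields $\|P_j\|\leq C|\xi|^j$.

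The delicate step is the lower bound on the eigenvalue differences $|\lambda_i-\lambda_k|$. In the case $a\neq 1$ (and the generic situation $a\neq k$, $k\neq 1$), the leading imaginary parts $\pm i\xi,\pm ia\xi,\pm ik\xi$ are pairwise distinct, so $|\lambda_i-\lambda_k|\gtrsim|\xi|$ for all $i\neq k$; the resulting bound $|r_j(t)|\leq C|\xi|^{1-j}e^{-c|\xi|^{-4}t}$, combined with $\|P_j\|\leq C|\xi|^j$, gives the desired exponential decay with at most a polynomial prefactor. In the case $a=k=1$, however, all six leading imaginary parts collapse to $\pm i\xi$, so the eigenvalues organize into three pairs sharing the same leading imaginary part; differences within a pair remain of order $|\xi|$, but differences across pairs with the same sign of imaginary part are only of order one, controlled by the bounded gaps between the real parts $0,\,-\gamma_2/6,\,-\gamma_2/2$, which must be shown to remain bounded below uniformly for $|\xi|\geq N$.

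The main obstacle is precisely this degeneracy of leading imaginary parts when $a=k=1$: the apparent loss of $|\xi|$-weight in the denominators of $r_j$ is compensated by the $|\xi|^j$ growth of $P_j$, but only after a careful accounting of how many ``small'' ($O(1)$) and how many ``large'' ($O(|\xi|)$) differences appear in each product for each $j\in\{1,\dots,6\}$. Once this accounting is done in each of the two cases, summing the six terms $r_{j+1}P_j\hat{U}_0$ and collecting the polynomial prefactors in $|\xi|$ yields \eqref{High_F_Estimate_1} in the $a=k=1$ case and \eqref{High_F_Estimate_2} in the $a\neq 1$ case, completing the proof.
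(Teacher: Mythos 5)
Your plan follows the paper's proof of this proposition essentially step for step: Putzer's formula, the high-frequency eigenvalue expansions of Proposition \ref{Propo_2}, the bound $|P_j|\leq C|\xi|^{j}$, the uniform exponential bounds $|e^{\lambda_i t}|\leq Ce^{-c|\xi|^{-2}t}$ (resp.\ $Ce^{-c|\xi|^{-4}t}$), and lower bounds on the eigenvalue differences entering $r_j$. You have also correctly located the crux, namely the accounting of which differences $\lambda_i-\lambda_k$ are of order $|\xi|$ and which are only of order one. Two things keep this from being a proof. First, that accounting is announced but never carried out: the specific prefactors $|\xi|^{2}$ and $|\xi|^{6}$ in \eqref{High_F_Estimate_1}--\eqref{High_F_Estimate_2} are precisely the output of that bookkeeping (in the paper it is recorded in \eqref{Estimate_Eigenvalues_2} and \eqref{Estimate_Eigenvalues_3}, which combine with $|P_j|\leq C|\xi|^{j}$ to give $|r_{j+1}||P_j|\leq C|\xi|^{2}$ when $a=k=1$ and $\leq C|\xi|^{6}$ when $a\neq 1$), so deferring it defers the entire quantitative content of the statement. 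Second, for $a\neq 1$ you only treat the generic sub-case $k\notin\{1,a\}$, where the leading imaginary parts $\pm i\xi,\pm ia\xi,\pm ik\xi$ are pairwise distinct; the proposition is claimed for every $k>0$, and when $k=1$ or $k=a$ two of the pairs collide at leading order, so some cross-pair differences are only $O(1)$ and must be bounded below via the distinct limits of the real parts, exactly as in your $a=k=1$ discussion. That situation is not covered by your argument as written.

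Two further remarks. Your observation that in the generic case all differences satisfy $|\lambda_i-\lambda_k|\gtrsim|\xi|$, so that $|r_{j+1}||P_j|=O(1)$, is actually sharper than the paper's bounds (which are computed from expansions in which the leading terms $\pm i\xi,\pm ia\xi$ appear to have been dropped); since only upper bounds are claimed, this does no harm. Also, the rate $e^{-c|\xi|^{-4}t}$ you obtain for $a\neq 1$ from the $\lambda_{3,4}$ branch is what the eigenvalue expansion genuinely yields and is what the paper's own proof ends with; it is weaker than the $e^{-\hat{c}_4|\xi|^{-2}t}$ written in \eqref{High_F_Estimate_2}, an inconsistency internal to the paper rather than a defect of your plan.
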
 
\begin{proof}
Now, for $|\xi|\rightarrow \infty$, we have the following expansion of the eigenvalues:

For $a=k=1$, we get  
\begin{equation}
\lambda_j(i\xi)=
\left\{
\begin{array}{ll}
\pm i\xi\pm i\dfrac{l^2}{2}\xi^{-1}-\dfrac{l^2 \gamma_2}{2}\xi^{-2}+O(|\xi|^{-3}),&\qquad \text{for}\qquad j=1,2.
\vspace{0.2cm}\\
\pm i\xi-\dfrac{\gamma_2}{6}+O(|\xi|^{-1}),&\qquad \text{for}\qquad j=3,4,\vspace{0.2cm}\\
\pm i\xi-\dfrac{\gamma_2}{2}+O(|\xi|^{-1}),&\qquad \text{for}\qquad j=5,6.
\end{array}
\right. 
\end{equation}
Using the same method as before, we have for $|\xi|\rightarrow \infty$
\begin{eqnarray}\label{Formula_Exponential_2}
\hat{U}(\xi,t)&=&e^{\Phi(i\xi)t}\hat{U}_0(\xi)=\sum_{j=0}^{5}r_{j+1}(t)P_j\hat{U}_0(\xi),
\end{eqnarray}
where $r_j(t)$ are given by \eqref{r_j_formula} and $P_j$ are defined as before. 

First of all, it is straightforward to see that for $|\xi|\rightarrow \infty$ and for all $1\leq j\leq 6$, we have
\begin{equation}\label{Estimate_lambda_j}
|e^{\lambda_j(\xi) t}|\leq Ce^{-c\xi^{-2} t}.
\end{equation}
 On the other hand, we have 
 \begin{equation}\label{Estimate_Eigenvalues_2}
\left\{
\begin{array}{ll}
\left|\dfrac{1}{\lambda _2-\lambda _1}\right|\leq C|\xi|^{-1},\vspace{0.3cm}\\
\left|\sum_{i=1}^{3}\Big(\prod_{k=1,(k\neq i)}^3 \dfrac{1}{(\lambda_i-\lambda_k)}\Big)\right|\leq C|\xi|^{-1}\vspace{0.3cm}\\
\left|\sum_{i=1}^{4}\Big(\prod_{k=1,(k\neq i)}^4 \dfrac{1}{(\lambda_i-\lambda_k)}\Big)\right|\leq C|\xi|^{-2}\vspace{0.3cm}\\
\left|\sum_{i=1}^{5}\Big(\prod_{k=1,(k\neq i)}^5 \dfrac{1}{(\lambda_i-\lambda_k)}\Big)\right|\leq C|\xi|^{-2}\vspace{0.3cm}\\
\left|\sum_{i=1}^{6}\Big(\prod_{k=1,(k\neq i)}^6 \dfrac{1}{(\lambda_i-\lambda_k)}\Big)\right|\leq C|\xi|^{-3}. 
\end{array}
\right.
\end{equation}
For the matrices $P_j,\, 0\leq j\leq 5$, we have as before 
\begin{equation*}
|P_0|=|I|\leq C, 
\end{equation*}
and since 
\begin{equation*}
|\Phi(i\xi)|=|-(L+i\xi A)|\leq C(1+|\xi|),
\end{equation*}
then, we have 
\begin{eqnarray*}
|P_1|=|\Phi(i\xi)-\lambda_1I|&=&|-(L+i\xi A)-\lambda_1I|\\
&\leq&C|\xi|, 
\end{eqnarray*}
for $|\xi|\rightarrow\infty$.  Also we have that
\begin{eqnarray*}
|P_2|&=&|P_1.(\Phi(i\xi)-\lambda_2I)|\\
&\leq&C |P_1|.(|\Phi(i\xi)|+|\lambda_2|)\\
&\leq& C|\xi|^2.
\end{eqnarray*}
Hence, we can deduce easily in the same fashion that for  $|\xi|\rightarrow\infty$
\begin{equation*}
|P_j|\leq C |\xi|^j \quad \mbox{ for all } j\in\{0,1,2,3,4,5\}.
\end{equation*}
Consequently, since all the eigenvalues are of multiplicity one when $|\xi|$ is large and by using \eqref{Formula_Exponential_2}, \eqref{Estimate_lambda_j}, \eqref{Estimate_Eigenvalues_2} and the above estimates, we obtain 
\begin{eqnarray*}
|\hat{U}(\xi,t)|&\leq&\sum_{j=0}^{5} |r_{j+1}(t)||P_j|\vert U_0(\xi)\vert\\
&\leq&C|\xi|^2e^{-c|\xi|^{-2} t}\vert U_0(\xi)\vert,
\end{eqnarray*}
for $|\xi|\rightarrow \infty.$

For $a\neq 1$, we obtain  
\begin{equation}
\lambda_j(i\xi)=
\left\{
\begin{array}{ll}
\pm i\dfrac{l^2(1-a^2)+1}{2 \left(a^2-1\right)}\xi^{-1}-\dfrac{l^2 \gamma_2}{2}\xi^{-2}+O(|\xi|^{-3}),&\qquad \text{for}\qquad j=1,2.
\vspace{0.2cm}\\
\mp i\dfrac{a}{2 \left(1-a^2\right)}\xi^{-1}\pm i\dfrac{a \left(a^2 \left(4 l^2-1\right)-4 l^2-3\right)}{8 \left(a^2-1\right)^3}\xi^{-3}\\
\qquad\qquad-\dfrac{l^2 \gamma_2}{2 (a-1)^2 (a+1)^2}\xi^{-4}+O(|\xi|^{-5}),&\qquad \text{for}\qquad j=3,4,\vspace{0.2cm}\\
\pm ik\xi-\dfrac{\gamma_2}{2}+O(|\xi|^{-1}),&\qquad \text{for}\qquad j=5,6.
\end{array}
\right. 
\end{equation}
In this case, we have 
 \begin{equation}\label{Estimate_Eigenvalues_3}
\left\{
\begin{array}{ll}
\left|\dfrac{1}{\lambda _2-\lambda _1}\right|\leq C|\xi|,\vspace{0.3cm}\\
\left|\sum_{i=1}^{3}\Big(\prod_{k=1,(k\neq i)}^3 \dfrac{1}{(\lambda_i-\lambda_k)}\Big)\right|\leq C|\xi|^{2}\vspace{0.3cm}\\
\left|\sum_{i=1}^{4}\Big(\prod_{k=1,(k\neq i)}^4 \dfrac{1}{(\lambda_i-\lambda_k)}\Big)\right|\leq C|\xi|^{3}\vspace{0.3cm}\\
\left|\sum_{i=1}^{5}\Big(\prod_{k=1,(k\neq i)}^5 \dfrac{1}{(\lambda_i-\lambda_k)}\Big)\right|\leq C|\xi|^{2}\vspace{0.3cm}\\
\left|\sum_{i=1}^{6}\Big(\prod_{k=1,(k\neq i)}^6 \dfrac{1}{(\lambda_i-\lambda_k)}\Big)\right|\leq C|\xi|. 
\end{array}
\right.
\end{equation}
Also, in this case, we have for the matrices $P_j,\,0\leq j\leq 5$,  as before  
\begin{equation*}
|P_j|\leq |\xi|^j,\qquad 1\leq j\leq 5.
\end{equation*}
Consequently, since 
\begin{equation*}
|e^{\lambda_j(\xi) t}|\leq Ce^{-c\xi^{-4} t},\qquad 1\leq j\leq 6,
\end{equation*}
then, we have by the same method as above 
\begin{eqnarray*}
|\hat{U}(\xi,t)|&\leq&\sum_{j=0}^{5} |r_{j+1}(t)||P_j|\vert U_0(\xi)\vert\\
&\leq&C|\xi|^6e^{-c|\xi|^{-4} t}\vert U_0(\xi)\vert,
\end{eqnarray*}
for $|\xi|\rightarrow \infty.$ This proves \eqref{High_F_Estimate_2}, and concludes the proof of Proposition \ref{Proposition_High}. 
\end{proof}
\subsection{The estimates in the middle   frequency region $\Upsilon_M$}
For the middle frequency region, we need to show first that for $\xi\in \Upsilon_M$, the matrix $\Phi(i\xi)$ has no pure imaginary eigenvalue. We have the following lemma. 
\begin{lemma}\label{Lemma_Reissig}
The matrix $\Phi(i\xi)=-L-i\xi A$ has no pure imaginary eigenvalue in $\Upsilon_M$. 
\end{lemma}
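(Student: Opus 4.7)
My plan is to argue by contradiction. Suppose there exist $\xi \in \Upsilon_M$ and $\tau \in \mathbb{R}$ such that $i\tau$ is an eigenvalue of $\Phi(i\xi)$, with eigenvector $X = (v,u,z,y,\phi,\eta)^T \neq 0$; equivalently $(L+i\xi A)X = -i\tau X$. The first step is to extract a rigidity condition from the dissipation structure. Taking the Hermitian inner product with $X$ and then the real part, and using that $A$ is real symmetric (so $\langle AX,X\rangle \in \mathbb{R}$) while the symmetric part of $L$ equals $\operatorname{diag}(0,0,0,\gamma_1,0,\gamma_2)$, I obtain $\gamma_1|y|^2 + \gamma_2|\eta|^2 = 0$. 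Since $\gamma_1=0$ and $\gamma_2>0$, this forces $\eta=0$; this is the only dissipative information available in the present single-damping regime.

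With $\eta=0$ at hand, I unravel the six scalar components of the eigenvalue equation written from \eqref{Fourier_system_2}. The sixth relation gives $\phi = ilv/(\xi k)$, and the fifth then yields $u = \tau v/(\xi k^2)$. Substituting into the second produces $v\bigl(\tau^2 - k^2(\xi^2+l^2)\bigr)=0$; if $v=0$ the above relations propagate $u=\phi=0$, and equations (1), (3), (4) then force $y=z=0$, contradicting $X\neq 0$. Hence $\tau^2 = k^2(\xi^2+l^2)$, so in particular $\tau\neq 0$. Using the third equation to eliminate $z$ in the fourth gives $y = -i\tau v/(\tau^2 - a^2\xi^2)$, while the first equation, after inserting the expression for $u$, independently gives $y = -i\tau v(k^2-1)/k^2$. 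Equating the two expressions for $y$ yields the compatibility identity $(k^2-1)(\tau^2 - a^2\xi^2) = k^2$.

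Combining this identity with $\tau^2 = k^2(\xi^2+l^2)$ collapses to the single algebraic relation
\begin{equation*}
(k^2-1)(k^2-a^2)\xi^2 \;=\; k^2\bigl(1-(k^2-1)l^2\bigr).
\end{equation*}
Under the hypothesis \eqref{Main_Assump} the right-hand side is nonzero. When $k=1$ the left-hand side vanishes, yielding an immediate contradiction; the same occurs when $k\neq 1$ but $k=a$. In the remaining case $k\neq 1$, $k\neq a$, the equation determines at most one value $\xi_0^2 = k^2\bigl(1-(k^2-1)l^2\bigr)/\bigl((k^2-1)(k^2-a^2)\bigr)$, and only when this quantity is positive.

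I expect the main obstacle to lie precisely in this last subcase: a single pair $\pm\xi_0$ of exceptional frequencies could in principle support a pure imaginary eigenvalue. I would handle it by observing that the set of such bad frequencies is finite, and that the bounds $\nu$ and $N$ defining $\Upsilon_M$ are only constrained by the validity of Propositions \ref{Proposition_Low} and \ref{Proposition_High} (that is, $\nu$ small enough and $N$ large enough). A further slight adjustment of $\nu$ and $N$ so that $|\xi_0|\notin[\nu,N]$ is therefore permissible and suffices; continuity of the eigenvalues of $\Phi(i\xi)$ in $\xi$ then delivers the lemma.
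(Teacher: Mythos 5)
Your derivation up to the algebraic relation $(k^2-1)(k^2-a^2)\xi^2=k^2\bigl(1-(k^2-1)l^2\bigr)$ is correct, and it follows a genuinely different route from the paper, which substitutes $\lambda=i\alpha$ into the characteristic polynomial \eqref{Characteristic_Poly_3} and separates real and imaginary parts; your eigenvector-based argument is more transparent. (One small omission: eliminating $z$ from the fourth equation divides by $\tau^2-a^2\xi^2$; if $\tau^2=a^2\xi^2$ the same two equations give $i\tau v=0$ directly, hence $v=0$, which you have already excluded, so that case is harmless.)

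The final step, however, is a genuine gap, and it cannot be repaired as proposed. The regions $\Upsilon_L$, $\Upsilon_M$, $\Upsilon_H$ must tile $\R$, with $\nu\ll1$ and $N\gg1$ dictated by the asymptotic expansions in Propositions \ref{Proposition_Low} and \ref{Proposition_High}; a fixed exceptional frequency with $\nu<|\xi_0|<N$ cannot be moved into $\Upsilon_L$ or $\Upsilon_H$ by a slight adjustment of $\nu$ and $N$. Worse, the two conditions you derived are not merely necessary but sufficient: given $\xi_0\neq0$ and $\tau$ with $\tau^2=k^2(\xi_0^2+l^2)$ and $(k^2-1)(\tau^2-a^2\xi_0^2)=k^2$, the choice $v=1$, $\eta=0$, $\phi=il/(\xi_0 k)$, $u=\tau/(\xi_0k^2)$, $y=-i\tau(k^2-1)/k^2$, $z=a\xi_0 y/\tau$ satisfies all six equations (the sixth reduces to $-i\xi_0k\phi-lv=l-l=0$), so $i\tau$ genuinely is an eigenvalue of $\Phi(i\xi_0)$. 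For instance $a=2$, $k=3$, $l=1/10$ satisfies \eqref{Main_Assump} and yields $\xi_0^2=207/1000$, $\tau^2=1953/1000$, a purely imaginary eigenvalue at a bona fide middle frequency. Hence the subcase you flagged as the main obstacle is not a technicality to be absorbed by continuity: when $k\neq1$, $k\neq a$ and $k^2\bigl(1-(k^2-1)l^2\bigr)/\bigl((k^2-1)(k^2-a^2)\bigr)>0$, the statement itself fails, and an additional hypothesis excluding this regime (e.g. $k=1$, or $k=a$, or that quotient nonpositive) is required before any proof can close.
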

\begin{proof}
We consider  the characteristic equation  \eqref{Characteristic_Poly_3}.
We argue by contradiction. Assume that there exists an eigenvalue $\lambda_0(\xi)=i\alpha$ a solution of \eqref{Characteristic_Poly_3} with $\alpha\in \R$. We plugg $\lambda_0$ into \eqref{Characteristic_Poly_3} and  splitt the real and imaginary parts.
Hence,

the real part: 
\begin{eqnarray}\label{Re_Equation}
&&\alpha ^6- \left(\xi ^2 \left(a^2+k^2+1\right)+\left(k^2+1\right) l^2+1\right)\alpha ^4\notag\\
&&+ \left(l^2+\xi ^2\right) \left\{k^2 \left(\left(a^2+1\right) \xi ^2+l^2+1\right)+a^2 \xi ^2\right\}\alpha ^2-a^2 k^2 \xi ^2 \left(l^2+\xi ^2\right)^2=0.
\end{eqnarray}

The imaginary part:
\begin{eqnarray}\label{Im_Equation}
\alpha\left[\alpha ^4 - \left( \left(a^2+1\right) \xi ^2+k^2 l^2+1\right)\alpha ^2+  \left(k^2 l^2 \left(a^2 \xi ^2+1\right)+a^2 \xi ^4\right)\right]=0.
\end{eqnarray}
On one hand,  if $\alpha=0$ in equation \eqref{Im_Equation}, then, we have from \eqref{Re_Equation}
\begin{equation*}
a^2 k^2 \xi ^2 \left(l^2+\xi ^2\right)^2=0,
\end{equation*}
which is a contradiction since $\xi\in \Upsilon_M$. On the other hand, we get  from \eqref{Im_Equation} that 
\begin{equation*}
S:=\alpha ^4 - \left( \left(a^2+1\right) \xi ^2+k^2 l^2+1\right)\alpha ^2+  \left(k^2 l^2 \left(a^2 \xi ^2+1\right)+a^2 \xi ^4\right)=0.
\end{equation*}
On the other hand, equation \eqref{Re_Equation} can be rewritten as 
\begin{eqnarray*}
\alpha^2 \left[S-(\xi^2k^2+l^2)\alpha^2+\left(l^2+\xi ^2\right) (k^2\xi^2+k^2l^2)+\xi^2(k^2+l^2a^2)\right]-a^2 k^2 \xi ^2 \left(l^2+\xi ^2\right)^2=0
\end{eqnarray*}
Thus, we obtain 
\begin{equation*}
(\xi^2k^2+l^2)\alpha^4-\left\{\left(l^2+\xi ^2\right) (k^2\xi^2+k^2l^2)+\xi^2(k^2+l^2a^2)\right\}\alpha^2+a^2 k^2 \xi ^2 \left(l^2+\xi ^2\right)^2=0. 
\end{equation*}
Solving the above equation, we obtain 
\begin{equation*}
\alpha^2_{1,2}=\frac{a^2 l^2 \xi ^2+k^2 \left(\left(l^2+\xi ^2\right)^2+\xi ^2\right)}{2 \left(k^2 \xi ^2+l^2\right)}\pm\sqrt{\frac{a^2 k^2 \xi ^2 \left(l^2+\xi ^2\right)^2}{k^2 \xi ^2+l^2}+\frac{\left(a^2 l^2 \xi ^2+k^2 \left(\left(l^2+\xi ^2\right)^2+\xi ^2\right)\right)^2}{4 (k^2 \xi ^2+l^2)^2}}
\end{equation*}
This leads to $\alpha_2^2<0,$ which is a contradiction. 
\end{proof}
\begin{lemma}\label{Lemma_Eigenvalue_Middle_Frequency}
There is a constant $C>0$, such that 
\begin{equation}\label{Bound_M_Eigenvalues}
Re(\lambda_j(i\xi))<-C<0, 
\end{equation}
for all $\xi\in \Upsilon_M$, where $\lambda_j(i\xi),\, 1\leq j\leq 6$ are the eigenvalues of the matrix $\Phi(i\xi)$. 
\end{lemma}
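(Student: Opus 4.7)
The plan is to combine Lemma \ref{Lemma_Reissig} with the dissipativity of the semigroup and a compactness argument on the compact set $\Upsilon_{M}$.

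First, I would establish that every eigenvalue of $\Phi(i\xi)$ satisfies $Re(\lambda_j(i\xi))\le 0$ for every $\xi\in\mathbb{R}$. This follows from the energy identity in Lemma \ref{diss_Lemma} (specialized to $\gamma_{1}=0$), which gives $\frac{d\hat{E}}{dt}=-\gamma_{2}|\hat{\eta}|^{2}\le 0$, hence
\begin{equation*}
|\hat{U}(\xi,t)|^{2}\le |\hat{U}(\xi,0)|^{2},\qquad \forall t\ge 0.
\end{equation*}
Since $\hat{U}(\xi,t)=e^{\Phi(i\xi)t}\hat{U}_{0}(\xi)$, this uniform-in-$t$ bound on the propagator forces $Re(\lambda_{j}(i\xi))\le 0$: otherwise one could choose initial data aligned with a generalized eigenspace associated with a $\lambda_{j}$ having positive real part, producing exponential growth that contradicts the dissipation inequality.

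Second, I would invoke Lemma \ref{Lemma_Reissig}, which rules out purely imaginary eigenvalues in $\Upsilon_{M}$. Combined with the previous step, this yields the pointwise strict inequality
\begin{equation*}
Re(\lambda_{j}(i\xi))<0,\qquad \forall\,\xi\in\Upsilon_{M},\quad 1\le j\le 6.
\end{equation*}

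Finally, I would upgrade this pointwise strict inequality to a uniform one by compactness. The coefficients of the characteristic polynomial \eqref{Characteristic_Poly_3} are continuous (in fact polynomial) functions of $\xi$, so each eigenvalue $\lambda_{j}(i\xi)$ is a continuous function of $\xi$ on $\Upsilon_{M}$ (which we take to mean $\nu\le |\xi|\le N$, a compact set). Hence $\xi\mapsto Re(\lambda_{j}(i\xi))$ is continuous on a compact set and attains its maximum, which by the previous step is strictly negative. Taking $C>0$ smaller than the absolute value of this maximum (and taking the minimum over the six indices $j$) yields the claimed bound $Re(\lambda_{j}(i\xi))<-C<0$ uniformly on $\Upsilon_{M}$.

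The only delicate point is justifying $Re(\lambda_{j}(i\xi))\le 0$ rigorously from the boundedness of $e^{\Phi(i\xi)t}$. If the matrix $\Phi(i\xi)$ were diagonalizable this is immediate; in the general case one should note that the existence of a Jordan block associated with an eigenvalue of real part $0$ would produce polynomial-in-$t$ growth, while an eigenvalue with positive real part produces exponential growth, both ruled out by $|\hat{U}(\xi,t)|\le |\hat{U}(\xi,0)|$. This is the only subtlety; the remaining steps are standard continuity/compactness reasoning.
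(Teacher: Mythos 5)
Your proof is correct, and it reaches the conclusion by a somewhat different route than the paper. The paper's own proof also rests on Lemma \ref{Lemma_Reissig} together with continuity (analyticity, via Kato) of the eigenvalues, but it obtains the sign information from the boundary of $\Upsilon_M$: the asymptotic expansions of Proposition \ref{Propo_2} give $Re(\lambda_j)<0$ near $|\xi|=\nu$ and $|\xi|=N$, and since $Re(\lambda_j)$ cannot vanish inside $\Upsilon_M$ by Lemma \ref{Lemma_Reissig}, a continuity (intermediate-value) argument forces it to remain negative throughout. You instead derive the one-sided bound $Re(\lambda_j(i\xi))\le 0$ for \emph{every} $\xi$ from the dissipation identity of Lemma \ref{diss_Lemma} (with $\gamma_1=0$), which bounds the propagator $e^{\Phi(i\xi)t}$ uniformly in $t$ and hence excludes eigenvalues with positive real part; combining this with Lemma \ref{Lemma_Reissig} gives strict negativity pointwise, and compactness of $\Upsilon_M$ gives the uniform constant. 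What your route buys is independence from the asymptotic expansions and from the requirement that $\nu$ be small and $N$ be large; it is also accurate that only exponential growth (not Jordan-block polynomial growth) needs to be excluded to get $Re(\lambda_j)\le 0$. The one point to phrase carefully is the continuity of the individual branches $\xi\mapsto\lambda_j(i\xi)$: a fixed labeling need not be globally continuous, but the quantity $\max_j Re(\lambda_j(i\xi))$ is continuous in $\xi$ (the spectrum depends continuously on the matrix entries), which is all the compactness step requires; alternatively one can invoke the analyticity in $\zeta$ cited by the paper.
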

\begin{proof}
Since by \cite[p.63]{Kat76_2} the eigenvalues are analytic in $\zeta=i\xi$, it follows by continuity and Lemma \ref{Lemma_Reissig} that if the real part of the eigenvalues is negative on the boundary and it cannot be zero inside the domain (by Lemma \ref{Lemma_Reissig}) then the real part of the eigenvalues must be negative inside the domain also. Which concludes the proof of  Lemma \ref{Lemma_Eigenvalue_Middle_Frequency}. 
\end{proof}

\begin{proposition}\label{Proposition_Middle}
Assume that $\Phi(i\xi) $ has an eigenvalue of multiplicity $m$. Then,  there exist two positive constant $\hat{c}_5$ and $C$ such that the solution $\hat{U}(\xi,t)$ of \eqref{Fourier_system} satisfies in $\Upsilon_{M}$ the estimates:
\begin{equation}\label{Middle_F_Estimate}
\vert \hat{U}( \xi ,t)\vert \leq \hat{c}_5(1+|\xi|^{2m}t^m) e^{-C t}\vert \hat{U}( \xi ,0)\vert,\qquad \forall t\geq 0.
\end{equation}
\end{proposition}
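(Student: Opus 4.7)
The approach parallels the low- and high-frequency analyses of Propositions \ref{Proposition_Low} and \ref{Proposition_High} and is based on Putzer's representation formula from Lemma \ref{Theorem_Exponential_avoid_Jordan}:
\begin{equation*}
\hat{U}(\xi,t) = e^{\Phi(i\xi)t}\hat{U}_0(\xi) = \sum_{j=0}^{5} r_{j+1}(t)\, P_j \,\hat{U}_0(\xi).
\end{equation*}
The cornerstone of the proof is Lemma \ref{Lemma_Eigenvalue_Middle_Frequency}, which guarantees that every eigenvalue $\lambda_j(i\xi)$ of $\Phi(i\xi)$ satisfies $\mathrm{Re}(\lambda_j(i\xi)) \leq -C < 0$ uniformly for $\xi \in \Upsilon_M$. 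Hence each exponential factor $e^{\lambda_j(i\xi)t}$ decays at the uniform rate $e^{-Ct}$.

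The crux of the argument is to control the scalar functions $r_j(t)$ when one of the eigenvalues has multiplicity $m$. I would solve the triangular ODE system \eqref{Functions_r_j} directly by the variation-of-constants formula (rather than using the Vandermonde sum \eqref{r_j_formula}, which becomes singular at collisions), noting that if $\lambda$ is a repeated eigenvalue appearing $m$ times in the listing, then the solutions $r_j(t)$ acquire polynomial-in-$t$ modulations $t^k e^{\lambda t}$ for $0 \leq k \leq m-1$. Combined with $\mathrm{Re}(\lambda_j(i\xi)) \leq -C$, this yields the pointwise bound
\begin{equation*}
|r_j(t)| \leq C(1+t^m)e^{-Ct}, \qquad \forall\, t\geq 0,\ \xi \in \Upsilon_M.
\end{equation*}

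For the matrices $P_j = \prod_{k=1}^{j}(\Phi(i\xi) - \lambda_k I)$, the elementary bound $|\Phi(i\xi)| \leq C(1+|\xi|)$ together with $|\lambda_k| \leq |\Phi(i\xi)|$ gives $|P_j| \leq C(1+|\xi|)^j$ for $j = 0,1,\ldots,5$. Since the exponent of $|\xi|$ in $P_j$ is bounded by $5 \leq 2m$ in the regime where $m \geq 3$ (and by elementary absorbing into the constant otherwise, using that $|\xi|$ is bounded below on $\Upsilon_M$), one obtains
\begin{equation*}
|\hat{U}(\xi,t)| \leq \sum_{j=0}^{5} |r_{j+1}(t)|\,|P_j|\,|\hat{U}_0(\xi)| \leq \hat{c}_5 \bigl(1 + |\xi|^{2m} t^m\bigr) e^{-Ct} |\hat{U}_0(\xi)|,
\end{equation*}
which is the stated estimate.

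The main obstacle I anticipate is the careful confluent-limit analysis of the $r_j$: when two or more eigenvalues coincide at an isolated value of $\xi \in \Upsilon_M$, the closed-form Vandermonde expression \eqref{r_j_formula} develops poles that cancel only after delicate regrouping, and one must exhibit these cancellations explicitly or, as I prefer, bypass them by integrating the ODE system \eqref{Functions_r_j} inductively using variation of constants, so that both the simple-root and the multiple-root regimes are treated in one unified framework. A secondary point is bookkeeping the dependence on $m$ in the power $|\xi|^{2m}$, which requires matching the worst-case growth of $|P_j|$ with the multiplicity exponent $t^m$ arising from the confluent Putzer coefficients.
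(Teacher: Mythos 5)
Your proposal is correct, but it resolves the key technical difficulty by a genuinely different route than the paper. The difficulty in $\Upsilon_M$ is not the degenerate frequencies themselves but the near-degenerate ones: the Vandermonde coefficients in \eqref{r_j_formula} blow up as two eigenvalues approach each other, so a bound obtained separately in the simple and multiple cases is not automatically uniform in $\xi$. The paper handles this by first proving the estimate exactly at an isolated degenerate frequency $\xi_0$ (using the confluent form of $r_6$ as in Case 3 of Proposition \ref{Proposition_Low}), and then extending it to a neighborhood of $\xi_0$ by a perturbation argument: writing $\hat{U}_t-\Phi(i\xi_0)\hat{U}=(\Phi(i\xi)-\Phi(i\xi_0))\hat{U}$ and applying Duhamel's principle and Gronwall's inequality to the fundamental matrix of $\partial_t-\Phi(i\xi_0)$, following Reissig--Wang. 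Your alternative --- bounding the $r_j$ inductively from the variation-of-constants recursion $r_j(t)=\int_0^t e^{\lambda_j(t-s)}r_{j-1}(s)\,ds$ --- is cleaner: since Lemma \ref{Lemma_Eigenvalue_Middle_Frequency} gives $\mathrm{Re}(\lambda_j(i\xi))\leq -C$ uniformly on $\Upsilon_M$, one gets directly
\begin{equation*}
|r_j(t)|\leq \int_0^t e^{-C(t-s)}|r_{j-1}(s)|\,ds\leq \frac{t^{j-1}}{(j-1)!}\,e^{-Ct},
\end{equation*}
which is uniform in $\xi\in\Upsilon_M$ and requires no case distinction on multiplicities, no confluent limits, and no perturbation step; combined with $|P_j|\leq C(1+|\xi|)^j\leq C$ on the bounded set $\Upsilon_M$, the polynomial factor is absorbed and the stated estimate follows. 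One remark: the part of your write-up that invokes ``polynomial modulations $t^ke^{\lambda t}$ of order $m-1$'' is the closed-form picture that suffers from exactly the non-uniformity you are trying to avoid; the integral recursion alone already yields the uniform $t^{j-1}e^{-Ct}$ bound, so you should lean entirely on that and drop the multiplicity bookkeeping. Both approaches prove the proposition; yours is more elementary and gives the estimate on all of $\Upsilon_M$ in one stroke, while the paper's perturbation argument is the more standard device and generalizes to situations where no uniform spectral gap for the $r_j$ recursion is available.
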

\begin{proof}
First,  if the eigenvalues of $\Phi(i\xi)$ are simple then, it is not hard to see that for $\xi\in \Upsilon_M$, we have 
\begin{eqnarray}\label{r_j_formula_M}
\left|\sum_{i=1}^{j}\Big(\prod_{k=1,(k\neq i)}^j \frac{1}{(\lambda_i-\lambda_k)}\Big)\right|\leq C,\qquad 1\leq j\leq 6.
\end{eqnarray}
Also, 
\begin{equation}\label{P_j_M}
|P_j|\leq C.
\end{equation}
Thus, using \eqref{Bound_M_Eigenvalues} together with \eqref{r_j_formula_M} and \eqref{P_j_M}, we deduce that \eqref{Middle_F_Estimate} holds true. 

Second, let us discuss, for example, the case of one double roots.  Assume that there exists $\xi_0\in\Upsilon_M$ such that 
$\lambda_5(i\xi_0)=\lambda_6(i\xi_0)$ (that is $m=1$ in \eqref{Middle_F_Estimate}). Hence, as we have seen in the proof of Proposition \ref{Proposition_Low} (case 3), 
and by \eqref{Bound_M_Eigenvalues}
\begin{equation*}
|r_j(t)|\leq C_1e^{-Ct},\qquad 1\leq j\leq 5
\end{equation*}
and 
\begin{equation*}
|r_6(t)|\leq C_1(1+t)e^{-Ct}. 
\end{equation*}
On the other hand, we can show that 
\begin{equation*}
|P_j(i\xi_0)|\leq C,\qquad 0\leq j\leq 5.  
\end{equation*}
In particular 
\begin{equation*}
|P_5(i\xi_0)|\leq C\leq C|\xi_0|^2. 
\end{equation*}
Therefore, by collecting the above estimates, we get 
\begin{equation}\label{Middle_F_Estimate}
\vert \hat{U}( \xi_0 ,t)\vert \leq (C_3 +|\xi_0|^2 t)e^{-C t}\vert \hat{U}( \xi_0 ,0)\vert,\qquad \forall t\geq 0.
\end{equation}
Now, let us consider, for $\xi$ varying in small neighborhood of $\xi_0$, the problem 
\begin{equation}\label{Solution_Equation}
\hat{U}_t-\Phi(i\xi_0)\hat{U}=\left(\Phi(i\xi)-\Phi(i\xi_0)\right)\hat{U}.
\end{equation}
Let $M(t,s,\xi_0)$ be the fundamental matrix of the operator $\partial_t-\Phi(i\xi_0)$. Then, it does satisfy the system 
\begin{equation*}
\left\{
\begin{array}{ll}
\partial_tM(t,s,\xi_0)-\Phi(i\xi_0)M(t,s,\xi_0)=0,\vspace{0.2cm}\\
M(s,s,\xi_0)=I.
\end{array}
\right.
\end{equation*}
Due to \eqref{Middle_F_Estimate}, then the estimate 
\begin{equation*}
|M(t,s,\xi_0)|\leq (C_3 +|\xi_0|^2 t)e^{-C (t-s)}
\end{equation*}
holds for all $0\leq s\leq t.$ If $\xi$ is near $\xi_0$, then it holds that 
\begin{equation*}
\left|\Phi(i\xi)-\Phi(i\xi_0)\right|\leq \varepsilon, 
\end{equation*}
for some $\varepsilon>0$. As in \cite[Proposition 3.3]{Reiss}, by Duhamel's principle and Gronwall's inequality, we can obtain from \eqref{Solution_Equation} the estimate
\begin{equation*}
|\hat{U}(\xi,t)|\leq (C_3 +|\xi|^2 t)e^{-(C-\varepsilon)t}. 
\end{equation*}
Hence, for $\varepsilon$ small enough, we deduce \eqref{Middle_F_Estimate}.   
\end{proof}
\subsection{Proof of Theorem \ref{Theorem_One_Damping}}\label{Theorem_One_Damping_1}
In this section, we show the decay estimate of the solution. To achieve this, we use the pointwise  estimates obtained above.

Applying the Plancherel theorem and using the first estimate in  (\ref%
{estimate_fourier_semigroup1_gamma_0_gamma_0}), we obtain%
\begin{eqnarray}
\left\Vert \partial _{x}^{j}U\left( t\right) \right\Vert _{L^{2}}^{2}
&=&\int_{\mathbb{R} }\vert \xi \vert ^{2j}\vert \hat{U}%
\left( \xi ,t\right) \vert ^{2}d\xi  \notag \\
&= &\int_{\Upsilon_L}\vert \xi \vert ^{2j}\vert \hat{U}%
\left( \xi ,t\right) \vert ^{2}d\xi +\int_{\Upsilon_M}\vert \xi \vert ^{2j}\vert \hat{U}%
\left( \xi ,t\right) \vert ^{2}d\xi+\int_{\Upsilon_H}\vert \xi \vert ^{2j}\vert \hat{U}%
\left( \xi ,t\right) \vert ^{2}d\xi
  \label{deron_U_equality_Y_L}
\end{eqnarray}
Using Proposition \ref{Proposition_Low}, we have in the low frequency region
\begin{eqnarray}\label{Low_k_1}
\int_{\Upsilon_L}\vert \xi \vert ^{2j}\vert \hat{U}%
\left( \xi ,t\right) \vert ^{2}d\xi&\leq &C\int_{\Upsilon_L}\vert \xi \vert ^{2j}e^{-c |\xi|^2 t}\vert \hat{U}( \xi ,0)\vert^2d\xi\notag\\
&\leq &C\Vert \hat{U}_{0}\Vert _{L^\infty }^{2}\int_{\left\vert
\xi \right\vert \leq 1}\left\vert \xi \right\vert ^{2j}e^{-c|\xi|
^{2}t}d\xi\notag\\
& \leq &C\left( 1+t\right) ^{-\frac{1}{2}\left( 1+2j\right)
}\left\Vert U_{0}\right\Vert _{L^{1}}^{2}.
\end{eqnarray}

In the high frequency region we distinguish two case:

First, we assume that $a=k=1$, then, we get, by using \eqref{High_F_Estimate_1}
\begin{eqnarray}\label{High_Estimate_k}
\int_{\Upsilon_H}\vert \xi \vert ^{2j}\vert \hat{U}%
\left( \xi ,t\right) \vert ^{2}d\xi&\leq &C\int_{\Upsilon_H}\vert \xi \vert ^{2(j+2)}e^{-c |\xi|^{-2} t}\vert \hat{U}( \xi ,0)\vert^2d\xi\notag\\
&\leq &C\sup_{\left\vert \xi \right\vert \geq 1}\left( \left\vert \xi
\right\vert ^{-2\ell}e^{-c\xi ^{-2}t}\right) \int_{\left\vert \xi \right\vert
\geq 1}\left\vert \xi \right\vert ^{2\left( j+\ell+2\right) }\vert \hat{U}%
\left( \xi ,0\right) \vert ^{2}d\xi  \notag\notag \\
&\leq &C\left( 1+t\right) ^{-\ell}\left\Vert \partial
_{x}^{j+\ell+1}U_{0}\right\Vert _{L^2}^{2}.
\end{eqnarray}
Second, for $a\neq 1$, we have, buy using \eqref{High_F_Estimate_2}, 
\begin{eqnarray}\label{High_Estimate_k_2}
\int_{\Upsilon_H}\vert \xi \vert ^{2j}\vert \hat{U}%
\left( \xi ,t\right) \vert ^{2}d\xi&\leq &C\int_{\Upsilon_H}\vert \xi \vert ^{2(j+6)}e^{-c |\xi|^{-2} t}\vert \hat{U}( \xi ,0)\vert^2d\xi\notag\\
&\leq &C\sup_{\left\vert \xi \right\vert \geq 1}\left( \left\vert \xi
\right\vert ^{-2\ell}e^{-c\xi ^{-2}t}\right) \int_{\left\vert \xi \right\vert
\geq 1}\left\vert \xi \right\vert ^{2\left( j+\ell+6\right) }\vert \hat{U}%
\left( \xi ,0\right) \vert ^{2}d\xi  \notag\notag \\
&\leq &C\left( 1+t\right) ^{-\ell}\left\Vert \partial
_{x}^{j+\ell+3}U_{0}\right\Vert _{L^2}^{2}.
\end{eqnarray}
For the middle frequency region, we have by using Proposition \ref{Proposition_Middle}, 
\begin{eqnarray}\label{Middle_k_Estimate}
\int_{\Upsilon_M}\vert \xi \vert ^{2j}\vert \hat{U}%
\left( \xi ,t\right) \vert ^{2}d\xi&\leq &C\int_{\Upsilon_M}\vert \xi \vert ^{2j}(1+|\xi|^{2m}t^m) e^{-C t}\vert \hat{U}( \xi ,0)\vert^2d\xi\notag\\
&\leq& Ce^{-ct}\Vert \partial_x^j U_0\Vert_{L^2},
\end{eqnarray}
where we have used the estimate 
\begin{equation*}
\int_{0}^{\infty}\left\vert \xi \right\vert ^{\sigma }e^{-c\xi ^{2}t}d\xi \leq
C\left( 1+t\right) ^{-\left( \sigma +1\right) /2}.
\end{equation*}%

Collecting \eqref{Low_k_1}, \eqref{High_Estimate_k}, \eqref{High_Estimate_k_2} and \eqref{Middle_k_Estimate}, then the estimates in Theorem \ref{Theorem_One_Damping} are fulfilled.


\begin{thebibliography}{10}

\bibitem{FaR10}
L.~H. Fatori and J.~E.~Mu{\~{n}}oz Rivera.
\newblock Rates of decay to weak thermoelastic {B}resse system.
\newblock {\em IMA Journal of Applied Mathematics}, pages 1--24, 2010.

\bibitem{IHK08}
K.~Ide, K.~Haramoto, and S.~Kawashima.
\newblock Decay property of regularity-loss type for dissipative {T}imoshenko
  system.
\newblock {\em Math. Mod. Meth. Appl. Sci.}, 18(5):647--667, 2008.

\bibitem{IK08}
K.~Ide and S.~Kawashima.
\newblock Decay property of regularity-loss type and nonlinear effects for
  dissipative {T}imoshenko system.
\newblock {\em Math. Mod. Meth. Appl. Sci.}, 18(7):1001--1025, 2008.

\bibitem{Kat76_2}
T.~Kato.
\newblock {\em Perturbation theory for linear operators}.
\newblock 2nd ed. {S}pringer-{V}erlage, 1976.

\bibitem{LRa09}
Z.~Liu and B.~Rao.
\newblock Energy decay rate of the thermoelastic {B}resse system.
\newblock {\em Z. Angew. Math. Phys.}, 60:54--69, 2009.

\bibitem{Putzer_1966}
E.~J. Putzer.
\newblock Avoiding the {J}ordan canonical form in the discussion of linear
  systems with constant coefficients.
\newblock {\em Amer. Math. Monthly}, 73:2--7, 1966.

\bibitem{Racke_Said_2012_1}
R.~Racke and B.~Said-Houari.
\newblock Global existence and decay property of the {T}imoshenko system in
  thermoelasticity with second sound.
\newblock {\em Nonlinear Anal.}, 75(13):4957--4973, 2012.

\bibitem{Reiss}
M.~Reissig and G.~Y. Wang.
\newblock {C}auchy problems for linear thermoelastic systems of type {III} in
  one space variable.
\newblock {\em Math. Methods Appl. Sci.}, 28:1359--1381, 2005.

\bibitem{Said_Katia_2015_2}
B~Said-Houari and T.~Hamadouche.
\newblock The {C}auchy problem of the {B}resse system in thermoelasticity of
  type {III}.
\newblock {\em Applicable Analysis}, pages 1--16, 2015.

\bibitem{Said_Katia_2015}
B.~Said-Houari and T.~Hamadouche.
\newblock The asymptotic behavior of the {B}resse--{C}attaneo system.
\newblock {\em Communications in Contemporary Mathematics}, 2016.

\bibitem{Said_Soufyan_2014_MMAS}
B.~Said-Houari and A.~Soufyane.
\newblock The {B}resse system in thermoelasticity.
\newblock {\em Math. Methods Appl. Sci.}, 38(17):3642--3652, 2015.

\bibitem{Said_Soufyane_2014_1}
A.~Soufyane and B.~Said-Houari.
\newblock The effect of frictional damping terms on the decay rate of the
  {B}resse system.
\newblock {\em Evolution Equations and Control Theory}, 3(4):713--738, 2014.

\bibitem{GT}
G.~Teschl.
\newblock {\em Ordinary differential equations and dynamical systems}, volume
  140.
\newblock American Mathematical Soc., 2012.

\end{thebibliography}
\end{document}